\newtheorem{theorem}{Theorem}[section]
\theoremstyle{plain}
\newtheorem{lemma}{Lemma}[section]
\newtheorem{remark}{Remark}[section]
\numberwithin{equation}{section}
\begin{document}
\title[Global attractors]{Global attractors for strongly damped wave
equations with displacement dependent damping and nonlinear source term of
critical exponent}
\author{A. Kh. Khanmamedov}
\address{Department of Mathematics, Faculty of Science, Hacettepe
University, Beytepe 06800, Ankara, Turkey.}
\email{azer@hacettepe.edu.tr}
\date{}
\subjclass[2000]{ 35B41; 35L05; 35L75}
\keywords{Attractors, strongly damped wave equations}
\thanks{}

\begin{abstract}
In this paper the long time behaviour of the solutions of the 3-D strongly
damped wave equation is studied. It is shown that the semigroup generated by
this equation possesses a global attractor in $H_{0}^{1}(\Omega )\times
L_{2}(\Omega )$ and then it is proved that this is also a global attractor
in $(H^{2}(\Omega )\cap H_{0}^{1}(\Omega ))\times H_{0}^{1}(\Omega )$.
\end{abstract}

\maketitle

\section{Introduction}

We consider the following initial-boundary value problem for the strongly
damped wave equation:%
\begin{equation}
w_{tt}-\Delta w_{t}+\sigma (w)w_{t}-\Delta w+f(w)=g(x)\text{\ \ \ \ \ \ in \
}(0,\infty )\times \Omega,  \tag{1.1}
\end{equation}%
\begin{equation}
w=0\text{ \ \ \ \ \ \ \ \ \ \ \ \ \ \ \ \ \ \ \ \ \ \ \ \ \ \ \ \ \ \ \ \ \
\ \ \ \ \ \ \ \ \ \ \ \ \ \ \ \ \ on \ }(0,\infty )\times \partial \Omega,
\tag{1.2}
\end{equation}%
\begin{equation}
w(0,\cdot )=w_{0}\text{ },\text{ \ \ \ \ \ \ \ \ \ \ \ \ \ \ \ \ \ \ \ \ \ \
}w_{t}(0,\cdot )=w_{1}\text{\ \ \ \ \ \ \ \ \ \ \ \ \ \ \ in \ }\Omega,
\tag{1.3}
\end{equation}%
where $\Omega \subset R^{3}$ is a bounded domain with sufficiently smooth
boundary and $g\in L_{2}(\Omega )$.

As shown in \cite{6} and \cite{13}, equation (1.1) is related to the
following reaction-diffusion equation with memory:%
\begin{equation}
w_{t}(t,x)=\overset{t}{\underset{-\infty }{\int }}K(t,s)\Delta
w(s,x)ds-f(w(t,x))+g(x).  \tag{1.4}
\end{equation}%
Namely, if $K(t,s)=\frac{1-\alpha }{\lambda }e^{-\frac{t-s}{\lambda }%
}+2\alpha \delta (t-s)$ then (1.4) can be transformed into%
\begin{equation*}
\lambda w_{tt}-\alpha \lambda \Delta w_{t}+(1+\lambda f^{\prime
}(w))w_{t}-\Delta w+f(w)=g,
\end{equation*}%
where $\lambda >0,$ $\alpha \in \lbrack 0,1)$ and $\delta $ is a Dirac delta
function. This equation is interesting from a physical viewpoint as a model
describing the flow of viscoelastic fluids (see \cite{6} and \cite{13} for
details).

When $\sigma (\cdot )\equiv 0$ the equation (1.1) becomes
\begin{equation}
w_{tt}-\Delta w_{t}-\Delta w+f(w)=g.\text{\ }  \tag{1.5}
\end{equation}%
The long time behaviour (in terms of attractors) of solutions in this case
has been studied by many authors (see \cite{2}, \cite{5}, \cite{7}, \cite{14}%
, \cite{15}, \cite{19}, \cite{22} and references therein). In \cite{14} the
existence of a global attractor for (1.5) with critical source term (i.e. in
the case when the growth of $f$ is of order 5) was proved. However, the
regularity of the global attractor in that article was established only in
the subcritical case. For the critical case, the regularity of the global
attractor of (1.5) was proved in \cite{15}, under the assumptions
\begin{equation}
f\in C^{1}(R)\text{, }\left\vert f^{\prime }(s)\right\vert \leq
c(1+\left\vert s\right\vert ^{4})\text{, }\forall s\in R\text{ and }\underset%
{\left\vert s\right\vert \rightarrow \infty }{\liminf }f^{\prime
}(s)>-\lambda _{{\small 1}}  \tag{1.6}
\end{equation}%
or
\begin{equation}
f\in C^{2}(R)\text{, }\left\vert f^{\prime \prime }(s)\right\vert \leq
c(1+\left\vert s\right\vert ^{3})\text{, }\forall s\in R\text{ and }\underset%
{\left\vert s\right\vert \rightarrow \infty }{\liminf }\frac{f(s)}{s}%
>-\lambda _{{\small 1}},  \tag{1.7}
\end{equation}%
where $\lambda _{{\small 1}}$ is a first eigenvalue of $-\Delta $ with zero
Dirichlet data. In that article the authors obtained a regular estimate for $%
w_{tt}$ (when $w(t,x)$ is a weak solution of (1.5)) and then proved the
asymptotic regularity of the solution of the non-autonomous equation
\begin{equation*}
-\Delta w_{t}-\Delta w+f(w)=g-w_{tt}.
\end{equation*}%
In \cite{5} and \cite{19}, the regularity of the global attractor of (1.5)
was proved under the following weaker condition on the source term:%
\begin{equation*}
f\in C(R)\text{, }|f(u)-f(v)|\leq c(1+|u|^{4}+|v|^{4})|u-v|\text{, }\forall
u,v\in R\text{ and }\underset{\left\vert s\right\vert \rightarrow \infty }{%
\liminf }\frac{f(s)}{s}>-\lambda _{{\small 1}}\text{.}
\end{equation*}%
In \cite{8}, the authors investigated the weak attractor for the
quasi-linear strongly damped equation%
\begin{equation*}
w_{tt}-\Delta w_{t}-\Delta w+f(w)=\nabla \cdot \varphi ^{\prime }(\nabla w)+g
\end{equation*}%
under the following conditions on the nonlinear functions $f$ and $\varphi $:%
\begin{equation*}
f\in C^{1}(R)\text{, \ }-C+a_{1}\left\vert s\right\vert ^{q}\leq f^{\prime
}(s)\leq C\left\vert s\right\vert ^{q}\text{, }\forall s\in R\text{,}
\end{equation*}%
\begin{equation*}
\varphi \in C^{2}(R^{3},R)\text{, \ }a_{2}\left\vert \eta \right\vert
^{p-1}\left\vert \xi \right\vert ^{2}\leq \sum_{i,j=1}^{3}\frac{\partial
^{2}\varphi (\eta )}{\partial \eta _{i}\partial \eta _{j}}\xi _{i}\xi
_{j}\leq a_{3}(1+\left\vert \eta \right\vert ^{p-1})\left\vert \xi
\right\vert ^{2}\text{, \ }\forall \xi ,\eta \in R^{3}\text{,}
\end{equation*}%
for some $a_{i}>0$, ($i=1,2,3$), $C>0$, $q>0$ and $p\in \lbrack 1,5)$. When $%
\frac{\partial ^{2}\varphi }{\partial \eta _{i}\partial \eta _{j}}=0$, ($%
i,j=1,2,3$), the strong attractor has also been studied. Recently, in \cite%
{3}, the authors have studied the global attractor for the strongly damped
abstract equation
\begin{equation*}
w_{tt}+D(w,w_{t})+Aw+F(w)=0.
\end{equation*}
However, the approaches of the articles mentioned above, in general, do not
seem to be applicable to (1.1). The difficulty is caused by the term $\sigma
(w)w_{t}$, when the function $\sigma (\cdot )$ is not differentiable and the
growth condition imposed on $\sigma (\cdot )$ is critical. In this paper we
prove the existence of the global attractors for (1.1)-(1.3) in $%
H_{0}^{1}(\Omega )\times L_{2}(\Omega )$ and $(H^{2}(\Omega )\cap
H_{0}^{1}(\Omega ))\times H_{0}^{1}(\Omega )$. Then using the embedding $H^{%
\frac{3}{2}+\varepsilon }(\Omega )\subset C(\overline{\Omega })$ we show
that these attractors coincide.

\section{Well-posedness and the statement of the main result}

We start with the conditions on nonlinear terms $f$ and $\sigma $.%
\begin{equation}
\bullet \text{ }f\in C(R),\text{ }\left\vert f(s)-f(t)\right\vert \leq
c(1+\left\vert s\right\vert ^{4}+\left\vert t\right\vert ^{4})\left\vert
s-t\right\vert \text{, }\forall s,t\in R\text{, }  \tag{2.1}
\end{equation}%
\begin{equation}
\bullet \ \underset{\left\vert s\right\vert \rightarrow \infty }{\liminf }%
\frac{f(s)}{s}>-\lambda _{{\small 1}},\text{ where }\lambda _{{\small 1}}=%
\underset{\varphi \in H_{0}^{1}(\Omega ),\varphi \neq 0}{\inf }\frac{%
\left\Vert \nabla \varphi \right\Vert _{L_{{\small 2}}(\Omega )}^{2}}{%
\left\Vert \varphi \right\Vert _{L_{{\small 2}}(\Omega )}^{2}}\text{ ,\ \ \
\ \ \ \ \ }  \tag{2.2}
\end{equation}

\begin{equation}
\bullet \text{ }\sigma \in C(R)\text{, \ }\sigma (s)\geq 0\text{, \ }%
\left\vert \sigma (s)\right\vert \leq c(1+\left\vert s\right\vert ^{4})\text{%
, \ }\forall s\in R\text{. \ \ \ \ \ \ \ \ \ \ \ \ }  \tag{2.3}
\end{equation}%
By the standard Galerkin's method it is easy to prove the following
existence theorem:

\begin{theorem}
Let conditions (2.1)-(2.3) hold. Then for every $T>0$ and every $%
(w_{0},w_{1})\in \mathcal{H}:=H_{0}^{1}(\Omega )\times L_{2}(\Omega )$, the
problem (1.1)-(1.3) admits a weak solution
\begin{equation*}
w\in C([0,T];H_{0}^{1}(\Omega )),\text{ }w_{t}\in C([0,T];L_{2}(\Omega
))\cap L_{2}(0,T;H_{0}^{1}(\Omega )),
\end{equation*}%
which satisfies the following energy equality%
\begin{equation*}
E(w(t))+\overset{t}{\underset{s}{\int }}\left\Vert \nabla w_{t}(\tau
)\right\Vert _{L_{2}(\Omega )}^{{\small 2}}d\tau +\overset{t}{\underset{s}{%
\int }}\left\langle \sigma (w(\tau ))w_{t}(\tau ),w_{t}(\tau )\right\rangle
d\tau +\left\langle F(w(t)),1\right\rangle -
\end{equation*}%
\begin{equation}
-\left\langle g,w(t)\right\rangle =E(w(s))+\left\langle
F(w(s)),1\right\rangle -\left\langle g,w(s)\right\rangle ,\text{ \ \ \ \ \ \
\ }0\leq s\leq t\leq T,  \tag{2.4}
\end{equation}%
where $E(w(t))=\frac{{\small 1}}{{\small 2}}(\left\Vert \nabla
w(t)\right\Vert _{L_{2}(\Omega )}^{{\small 2}}+\left\Vert
w_{t}(t)\right\Vert _{L_{2}(\Omega )}^{{\small 2}}),$ $\left\langle
u,v\right\rangle =\underset{\Omega }{\int }$ $u(x)v(x)dx$ and $F(w)=\overset{%
w}{\underset{0}{\int }}f(u)du.$
\end{theorem}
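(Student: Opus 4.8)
The plan is to construct the solution by the Faedo--Galerkin scheme and then recover the energy identity by testing the limit equation with $w_t$. Let $\{e_j\}_{j\ge 1}$ be the $L_2(\Omega)$-orthonormal eigenfunctions of $-\Delta$ with Dirichlet data, which are simultaneously orthogonal in $H_0^1(\Omega)$, and seek $w_n(t)=\sum_{j=1}^n g_j^n(t)e_j$ solving the projection of (1.1) onto $\mathrm{span}\{e_1,\dots,e_n\}$, with $(w_n(0),w_{n,t}(0))$ the projections of $(w_0,w_1)$. Since $f$ and $\sigma$ are continuous, Carath\'eodory theory gives a local solution of the resulting second-order ODE system for the coefficients $g_j^n$.

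To obtain global existence and the basic bounds I would test the projected system with $w_{n,t}$, producing the Galerkin energy identity
\begin{equation*}
\frac{d}{dt}\Big(E(w_n)+\langle F(w_n),1\rangle-\langle g,w_n\rangle\Big)+\|\nabla w_{n,t}\|_{L_2}^2+\langle \sigma(w_n)w_{n,t},w_{n,t}\rangle=0 .
\end{equation*}
Since $\sigma\ge 0$ by (2.3) the last two terms are nonnegative, and (2.2) yields, after Poincar\'e's inequality, a coercive lower bound $\tfrac12\|\nabla w\|_{L_2}^2+\langle F(w),1\rangle-\langle g,w\rangle\ge \delta\|\nabla w\|_{L_2}^2-C$ for some $\delta>0$. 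Hence one obtains bounds for $w_n$ in $L^\infty(0,T;H_0^1)$ and for $w_{n,t}$ in $L^\infty(0,T;L_2)\cap L_2(0,T;H_0^1)$, uniform in $n$; these preclude blow-up, so the approximations are global.

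The passage to the limit is where the critical growth bites. Weak-$*$ and weak limits give $w_n\rightharpoonup w$ and $w_{n,t}\rightharpoonup w_t$ in the respective spaces, and Aubin--Lions (using $H_0^1\hookrightarrow\hookrightarrow L_p\hookrightarrow L_2$, $p<6$, together with $w_{n,t}$ bounded in $L_2(0,T;L_2)$) yields $w_n\to w$ strongly in $L_2(0,T;L_p)$ and a.e. The term $f(w_n)$ is bounded in $L_{6/5}((0,T)\times\Omega)$ by $|f(s)|\le c(1+|s|^5)$ and converges a.e. to $f(w)$, so it converges weakly there by the standard boundedness-plus-a.e. lemma. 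The genuinely delicate term is $\sigma(w_n)w_{n,t}$: I would write $\sigma(w_n)w_{n,t}=(\sigma(w_n)-\sigma(w))w_{n,t}+\sigma(w)w_{n,t}$, pass to the limit in the second summand using $w_{n,t}\rightharpoonup w_t$ in $L_2(0,T;L_6)$ against a fixed test factor built from $\sigma(w)\in L^\infty(0,T;L_{3/2})$, and kill the first summand by proving $\sigma(w_n)\to\sigma(w)$ strongly in $L_2(0,T;L_{6/5})$. The latter is the crux: for a.e. $t$ the bound $|\sigma(w_n)|^{6/5}\le c(1+|w_n|^{24/5})$ with $w_n$ bounded in $L_6$ makes $\{|\sigma(w_n(t))|^{6/5}\}$ uniformly integrable, so Vitali's theorem upgrades the a.e. convergence to strong $L_{6/5}(\Omega)$ convergence for each $t$, and dominated convergence in $t$ (with the uniform $L_{3/2}\subset L_{6/5}$ bound as majorant) finishes it. This identifies $w$ as a weak solution, and reading $w_{tt}$ off the equation shows $w_{tt}\in L_2(0,T;H^{-1})$.

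For the regularity and the energy equality (2.4), note that $w\in H^1(0,T;H_0^1)\hookrightarrow C([0,T];H_0^1)$, while $w_t\in L_2(0,T;H_0^1)$ together with $w_{tt}\in L_2(0,T;H^{-1})$ gives $w_t\in C([0,T];L_2)$ by the Lions--Magenes lemma. The equation holds in $L_2(0,T;H^{-1})$, so $w_t$ is an admissible test function; pairing against it and using $\langle w_{tt},w_t\rangle=\tfrac{d}{dt}\tfrac12\|w_t\|_{L_2}^2$ and $\langle\nabla w,\nabla w_t\rangle=\tfrac{d}{dt}\tfrac12\|\nabla w\|_{L_2}^2$ reproduces the kinetic and potential energies, while the damping terms integrate to the two integral terms of (2.4). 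The remaining point is the chain rule $\tfrac{d}{dt}\langle F(w),1\rangle=\langle f(w),w_t\rangle$, which I expect to be the main obstacle alongside the $\sigma$-limit: it must be justified under critical growth from $w\in C([0,T];L_6)$, $w_t\in L_2(0,T;L_6)$ and $|f(s)|\le c(1+|s|^5)$, e.g. by a smooth approximation of $w$ controlled in these norms. Integrating the resulting identity over $[s,t]$ yields (2.4).
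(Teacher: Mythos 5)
Your proposal is correct and is essentially the argument the paper has in mind: the paper offers no details for Theorem 2.1, stating only that it follows ``by the standard Galerkin's method,'' and your sketch (Galerkin approximation, uniform energy bounds from (2.2)--(2.3), Aubin--Lions compactness plus uniform-integrability/Vitali arguments to pass to the limit in the critical terms $f(w_n)$ and $\sigma(w_n)w_{n,t}$, then recovery of the energy equality by testing the limit equation with $w_t$, which is admissible thanks to the strong damping) is precisely that standard route. You also correctly identify the two genuinely delicate points the paper leaves implicit --- the Nemytskii limit for $\sigma$ and the chain rule for $\langle F(w),1\rangle$ under critical growth --- and your indicated treatments of both are sound.
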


Now using the method of \cite[Proposition 2.2]{16} let us prove the
following uniqueness theorem:

\begin{theorem}
Let conditions (2.1)-(2.3) hold. If $w(t,\cdot)$ and $\widehat{w}(t,\cdot)$
are the weak solutions of (1.1)-(1.3), determined by Theorem 2.1, with
initial data $(w_{0},w_{1})$ and $(\widehat{w}_{0},\widehat{w}_{1})$
respectively, then%
\begin{equation*}
\left\Vert w(T)-\widehat{w}(T)\right\Vert _{H^{1}(\Omega )}^{2}+\left\Vert
w_{t}(T)-\widehat{w}_{t}(T)\right\Vert _{H^{-1}(\Omega )}^{2}\leq
\end{equation*}%
\begin{equation*}
\leq c(T,R)\left( \left\Vert w_{0}-\widehat{w}_{0}\right\Vert _{H^{1}(\Omega
)}+\left\Vert w_{1}-\widehat{w}_{1}\right\Vert _{H^{-1}(\Omega )}\right)
\end{equation*}%
where $c:R_{+}\times R_{+}\rightarrow R_{+}$ is a nondecreasing function
with respect to each variable and $R=\max \left\{ \left\Vert
(w_{0},w_{1})\right\Vert _{\mathcal{H}},\left\Vert (\widehat{w}_{0},\widehat{%
w}_{1})\right\Vert _{\mathcal{H}}\right\} $.
\end{theorem}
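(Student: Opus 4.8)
The plan is to work with the difference $u=w-\widehat{w}$, which satisfies
\[
u_{tt}-\Delta u_{t}-\Delta u+\bigl(\sigma (w)w_{t}-\sigma (\widehat{w})\widehat{w}_{t}\bigr)+\bigl(f(w)-f(\widehat{w})\bigr)=0,
\]
with $u(0)=w_{0}-\widehat{w}_{0}$ and $u_{t}(0)=w_{1}-\widehat{w}_{1}$. Because the critical growth of $f$ and the mere continuity of $\sigma$ block a direct estimate in the energy space $\mathcal{H}$, I would lower the regularity by one derivative and test the difference equation with $(-\Delta )^{-1}u_{t}+\varepsilon u$ for a small $\varepsilon >0$, where $\|v\|_{H^{-1}}^{2}=\langle (-\Delta )^{-1}v,v\rangle $. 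The multiplier $(-\Delta )^{-1}u_{t}$ produces $\tfrac{1}{2}\tfrac{d}{dt}\bigl(\|u_{t}\|_{H^{-1}}^{2}+\|u\|_{L_{2}}^{2}\bigr)+\|u_{t}\|_{L_{2}}^{2}$, while $\varepsilon u$ contributes the coercive term $\varepsilon \|\nabla u\|_{L_{2}}^{2}$ at the cost of $-\varepsilon \|u_{t}\|_{L_{2}}^{2}$ and $\varepsilon \tfrac{d}{dt}\langle u_{t},u\rangle $. For $\varepsilon $ small enough the functional
\[
\Phi (t)=\|u_{t}\|_{H^{-1}}^{2}+\|u\|_{L_{2}}^{2}+\varepsilon \|\nabla u\|_{L_{2}}^{2}+2\varepsilon \langle u_{t},u\rangle
\]
is equivalent to $\|u(t)\|_{H^{1}}^{2}+\|u_{t}(t)\|_{H^{-1}}^{2}$, the remaining quadratic terms being nonnegative. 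Throughout I would invoke the bounds supplied by Theorem 2.1: the energy equality (2.4) together with (2.2) yields $\sup_{[0,T]}(\|w\|_{H^{1}}+\|\widehat{w}\|_{H^{1}})\le c(R)$, the class $w_{t}\in C([0,T];L_{2})$ gives $\sup_{[0,T]}\|u_{t}\|_{L_{2}}\le c(T,R)$, and (2.4) furnishes $\int_{0}^{T}(\|\nabla w_{t}\|_{L_{2}}^{2}+\|\nabla \widehat{w}_{t}\|_{L_{2}}^{2})\,d\tau \le c(T,R)$.

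The source term is controlled by the critical embedding $H_{0}^{1}(\Omega )\subset L_{6}(\Omega )$. From (2.1) one has $|f(w)-f(\widehat{w})|\le c(1+|w|^{4}+|\widehat{w}|^{4})|u|$, hence $\|f(w)-f(\widehat{w})\|_{L_{6/5}}\le c(R)\|u\|_{H^{1}}$; combined with $\|(-\Delta )^{-1}u_{t}\|_{L_{6}}\le c\|(-\Delta )^{-1}u_{t}\|_{H^{1}}=c\|u_{t}\|_{H^{-1}}$ and $\|u\|_{L_{6}}\le c\|u\|_{H^{1}}$, both $\langle f(w)-f(\widehat{w}),(-\Delta )^{-1}u_{t}\rangle $ and $\langle f(w)-f(\widehat{w}),u\rangle $ are bounded by $c(R)\Phi (t)$. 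For the damping I would split $\sigma (w)w_{t}-\sigma (\widehat{w})\widehat{w}_{t}=\sigma (w)u_{t}+(\sigma (w)-\sigma (\widehat{w}))\widehat{w}_{t}$. The first piece is handled by Hölder together with $\|\sigma (w)\|_{L_{3/2}}\le c(1+\|w\|_{L_{6}}^{4})\le c(R)$, giving $|\langle \sigma (w)u_{t},(-\Delta )^{-1}u_{t}\rangle |\le \|\sigma (w)\|_{L_{3/2}}\|u_{t}\|_{L_{6}}\|(-\Delta )^{-1}u_{t}\|_{L_{6}}\le c(R)\|\nabla u_{t}\|_{L_{2}}\|u_{t}\|_{H^{-1}}$, whose first factor is square-integrable in time by the parabolic bound above.

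The main obstacle is the remaining term $(\sigma (w)-\sigma (\widehat{w}))\widehat{w}_{t}$: since $\sigma $ is only continuous and of critical growth, no Lipschitz bound on $\sigma (w)-\sigma (\widehat{w})$ is available, and a crude growth bound would produce an $O(1)$ contribution that cannot vanish with the data. Here I would exploit the smoothing of $(-\Delta )^{-1}$: as $u_{t}\in C([0,T];L_{2})$, one has $(-\Delta )^{-1}u_{t}\in C([0,T];H^{2}\cap H_{0}^{1})\subset C([0,T];C(\overline{\Omega }))$ with $\|(-\Delta )^{-1}u_{t}\|_{L_{\infty}}\le c\|u_{t}\|_{L_{2}}$ (using $H^{2}(\Omega )\subset C(\overline{\Omega })$ in dimension three), so that the pairing is dominated by $\|u_{t}\|_{L_{2}}\int_{\Omega }|\sigma (w)-\sigma (\widehat{w})|\,|\widehat{w}_{t}|\,dx$. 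The genuine smallness in $u$ would then be extracted by splitting $\Omega $ into the set where $|w|,|\widehat{w}|\le M$, on which the uniform continuity of $\sigma $ controls $|\sigma (w)-\sigma (\widehat{w})|$ by a modulus of continuity of $|u|$, and its complement, whose measure is $O(M^{-6})$ by the uniform $L_{6}$ bound and on which the crude growth bound with $\widehat{w}_{t}\in L_{6}$ suffices. I expect this step — the treatment of $(\sigma (w)-\sigma (\widehat{w}))\widehat{w}_{t}$ for non-differentiable, critically growing $\sigma $ — to be the crux of the argument.

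Collecting these bounds leads to a differential inequality for $\Phi $ whose right-hand side is controlled by $\Phi $ itself with an $L^{1}(0,T)$ coefficient bounded by $c(T,R)$; integrating in time and applying Gronwall's lemma yields $\Phi (T)\le c(T,R)\bigl(\|u(0)\|_{H^{1}}+\|u_{t}(0)\|_{H^{-1}}\bigr)$, which is the asserted estimate. In particular, equal initial data force $\Phi \equiv 0$, hence $u\equiv 0$, giving uniqueness.
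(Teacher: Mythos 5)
Your scheme breaks down already at the term you classify as easy, $\langle\sigma(w)u_{t},(-\Delta)^{-1}u_{t}\rangle$. Your own estimate for it is $c(R)\Vert\nabla u_{t}\Vert_{L_{2}}\Vert u_{t}\Vert_{H^{-1}}\leq a(t)\sqrt{\Phi(t)}$ with $a(t)=c(R)\Vert\nabla u_{t}(t)\Vert_{L_{2}}$, and this is \emph{not} of the form (integrable coefficient)$\,\times\,\Phi$, which is what your final Gronwall step silently assumes. The factor $\Vert\nabla u_{t}\Vert_{L_{2}}$ is not dominated by $\Phi$ (which only controls $u$ in $H^{1}$ and $u_{t}$ in $H^{-1}$), and the only a priori information available from (2.4) is $\int_{0}^{T}a^{2}\,dt\leq c(T,R)$ --- a quantity that does \emph{not} shrink as the initial data approach each other; making it shrink would amount to continuous dependence of $\nabla w_{t}$ in $L_{2}(0,T;L_{2})$, which is stronger than the theorem being proved, hence circular. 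A differential inequality $\Phi'\leq b(t)\Phi+a(t)\sqrt{\Phi}$ with $\int_{0}^{T}b\,dt\leq c(T,R)$ and $\Vert a\Vert_{L_{2}(0,T)}\leq c(T,R)$ only yields $\sqrt{\Phi(T)}\leq e^{c(T,R)}\bigl(\sqrt{\Phi(0)}+c(T,R)\bigr)$: the additive term does not vanish with the data, so neither the stated estimate nor even uniqueness follows (the model problem $\phi'=a\sqrt{\phi}$, $\phi(0)=0$, admits the nonzero solution $\phi(T)=\frac{1}{4}(\int_{0}^{T}a)^{2}$). The second gap is the one you flag yourself: for $(\sigma(w)-\sigma(\widehat{w}))\widehat{w}_{t}$, condition (2.3) provides only a modulus of continuity $\omega_{M}$ of $\sigma$ on $[-M,M]$, and $\omega_{M}$ is an arbitrary modulus. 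It cannot produce the linear rate $c(T,R)(\Vert w_{0}-\widehat{w}_{0}\Vert_{H^{1}}+\Vert w_{1}-\widehat{w}_{1}\Vert_{H^{-1}})$ asserted by the theorem, and for a general continuous $\sigma$ the resulting Osgood-type inequality need not even imply uniqueness. So both halves of your treatment of the damping fail, and they fail for the same structural reason: your multiplier forces you to pair $\sigma$-terms against $u_{t}$.

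The paper removes this obstruction with one move, the method of \cite{16}: integrate equation (1.1) in time before estimating, so that the damping enters only through its antiderivative $\Sigma(w)=\int_{0}^{w}\sigma(s)\,ds$, via $\int_{0}^{t}\sigma(w)w_{\tau}\,d\tau=\Sigma(w(t))-\Sigma(w_{0})$. Unlike $\sigma$ itself, $\Sigma$ is locally Lipschitz with critical growth, $\vert\Sigma(a)-\Sigma(b)\vert\leq c(1+\vert a\vert^{4}+\vert b\vert^{4})\vert a-b\vert$, so the data term $\Sigma(w_{0})-\Sigma(\widehat{w}_{0})$ is controlled linearly by $\Vert w_{0}-\widehat{w}_{0}\Vert_{H^{1}}$ in $H^{-1}$; and $\Sigma$ is monotone because $\sigma\geq0$, so after testing the integrated equation (2.5) by $u$ the term $\langle\Sigma(w)-\Sigma(\widehat{w}),u\rangle\geq0$ is simply discarded. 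No pairing of $\sigma(w)$ against a time derivative ever occurs, no Lipschitz or modulus-of-continuity property of $\sigma$ is ever invoked, and the Gronwall closes with the stated rate. If you wish to keep a direct, non-integrated argument, you would need a substitute for exactly this structure; with $\sigma$ merely continuous, nonnegative and of critical growth, your $(-\Delta)^{-1}u_{t}+\varepsilon u$ multiplier does not provide one.
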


\begin{proof}
By (2.1)-(2.4), it follows that
\begin{equation*}
\left\Vert (w(t),w_{t}(t))\right\Vert _{\mathcal{H}}+\left\Vert (\widehat{w}%
(t),\widehat{w}_{t}(t))\right\Vert _{\mathcal{H}}\leq c_{1}(R),\text{ \ \ }%
\forall t\geq 0.
\end{equation*}%
Denote $u(t,\cdot)=w(t,\cdot)-$ $\widehat{w}(t,\cdot)$ and $\widehat{u}%
(t,\cdot)=$ $\overset{t}{\underset{0}{\int }}u(\tau ,\cdot)d\tau $.
Integrating (1.1) for $w(t,\cdot)$ and $\widehat{w}(t,\cdot)$ on $[0,t]$ and
taking the difference, we have%
\begin{equation*}
u_{t}-\Delta u+\Sigma (w)-\Sigma (\widehat{w})-\Delta \widehat{u}+\overset{t}%
{\underset{0}{\int }}\left( f(w(\tau ,))-f(\widehat{w}(\tau ,))\right) d\tau
=
\end{equation*}%
\begin{equation}
=\Sigma (w_{0})-\Sigma (\widehat{w}_{0})-\Delta (w_{0}-\widehat{w}%
_{0})+w_{1}-\widehat{w}_{1},\text{ \ \ \ }\forall t\geq 0,  \tag{2.5}
\end{equation}%
where $\Sigma (w)=$ $\overset{w}{\underset{0}{\int }}\sigma (s)ds$. Testing
(2.5) by $u$ and taking into account (2.1), (2.3), (2.4) and monotonicity of
$\Sigma (\cdot )$, we find%
\begin{equation*}
\frac{d}{dt}E(\widehat{u}(t))+\frac{1}{2}\left\Vert \nabla u(t)\right\Vert
_{L_{2}(\Omega )}^{2}\leq
\end{equation*}%
\begin{equation*}
\leq c_{2}(R)\left( \left\Vert \nabla (w_{0}-\widehat{w}_{0})\right\Vert
_{L_{2}(\Omega )}^{2}+\left\Vert w_{1}-\widehat{w}_{1}\right\Vert
_{H^{-1}(\Omega )}^{2}\right) +
\end{equation*}%
\begin{equation}
+c_{2}(R)t\overset{t}{\underset{0}{\int }}\left\Vert \nabla u(\tau
)\right\Vert _{L_{2}(\Omega )}^{2}d\tau ,\text{ \ }\forall t\geq 0  \tag{2.6}
\end{equation}%
and consequently
\begin{equation*}
\frac{d}{dt}\widehat{E}(\widehat{u}(t))\leq c_{2}(R)\left( \left\Vert w_{0}-%
\widehat{w}_{0}\right\Vert _{H^{1}(\Omega )}^{2}+\left\Vert w_{1}-\widehat{w}%
_{1}\right\Vert _{H^{-1}(\Omega )}^{2}\right) +2c_{2}(R)t\widehat{E}(%
\widehat{u}(t)),
\end{equation*}%
where $\widehat{E}(\widehat{u}(t))=E(\widehat{u}(t))+\frac{1}{2}\overset{t}{%
\underset{0}{\int }}\left\Vert \nabla u(\tau )\right\Vert _{L_{2}(\Omega
)}^{2}d\tau $. Applying Gronwall's lemma to the last inequality, we get%
\begin{equation}
\widehat{E}(\widehat{u}(t))\leq c_{3}(R)e^{c_{2}(R)t^{2}}\left( \left\Vert
w_{0}-\widehat{w}_{0}\right\Vert _{H^{1}(\Omega )}^{2}+\left\Vert w_{1}-%
\widehat{w}_{1}\right\Vert _{H^{-1}(\Omega )}^{2}\right)  \tag{2.7}
\end{equation}%
By (2.1), (2.3), (2.4) and (2.7), it follows that%
\begin{equation*}
\left\vert \frac{d}{dt}E(\widehat{u}(t))\right\vert \leq \left\vert
\left\langle u_{t}(t),u(t)\right\rangle \right\vert +\left\vert \left\langle
\nabla \widehat{u}(t),\nabla u(t)\right\rangle \right\vert \leq
\end{equation*}%
\begin{equation*}
\leq c_{4}(R)\left( \left\Vert u(t)\right\Vert _{L_{2}(\Omega )}+\left\Vert
\nabla \widehat{u}(t)\right\Vert _{L_{2}(\Omega )}\right) \leq
\end{equation*}%
\begin{equation*}
\leq c_{5}(R)e^{\frac{c_{2}(R)t^{2}}{2}}\left( \left\Vert w_{0}-\widehat{w}%
_{0}\right\Vert _{H^{1}(\Omega )}+\left\Vert w_{1}-\widehat{w}%
_{1}\right\Vert _{H^{-1}(\Omega )}\right) ,\text{ \ \ }\forall t\geq 0.
\end{equation*}%
Taking into account (2.7) and the last inequality in (2.6), we obtain%
\begin{equation*}
\left\Vert \nabla u(t)\right\Vert _{L_{2}(\Omega )}^{2}\leq
c_{6}(R)(1+t)e^{c_{2}(R)t^{2}}\left( \left\Vert w_{0}-\widehat{w}%
_{0}\right\Vert _{H^{1}(\Omega )}\right. +
\end{equation*}%
\begin{equation*}
+\left. \left\Vert w_{1}-\widehat{w}_{1}\right\Vert _{H^{-1}(\Omega
)}\right) ,\text{ \ \ }\forall t\geq 0.
\end{equation*}%
Now, from (2.5), we have%
\begin{equation*}
\left\Vert u_{t}(t)\right\Vert _{H^{-1}(\Omega )}\leq \left\Vert \nabla
u(t)\right\Vert _{L_{2}(\Omega )}+\left\Vert \nabla \widehat{u}%
(t)\right\Vert _{L_{2}(\Omega )}+\left\Vert \Sigma (w(t))-\Sigma (\widehat{w}%
(t))\right\Vert _{H^{-1}(\Omega )}+
\end{equation*}%
\begin{equation*}
+\overset{t}{\underset{0}{\int }}\left\Vert f(w(\tau ,))-f(\widehat{w}(\tau
,))\right\Vert _{H^{-1}(\Omega )}d\tau +\left\Vert \Sigma (w_{0})-\Sigma (%
\widehat{w}_{0})\right\Vert _{H^{-1}(\Omega )}+
\end{equation*}%
\begin{equation*}
+\left\Vert \nabla (w_{0}-\widehat{w}_{0})\right\Vert _{L_{2}(\Omega
)}+\left\Vert w_{1}-\widehat{w}_{1}\right\Vert _{H^{-1}(\Omega )},\text{ \ \
\ }
\end{equation*}%
which due to the above inequalities gives%
\begin{equation*}
\left\Vert u_{t}(t)\right\Vert _{H^{-1}(\Omega )}^{2}\leq
c_{7}(R)(1+t)e^{c_{2}(R)t^{2}}\left( \left\Vert w_{0}-\widehat{w}%
_{0}\right\Vert _{H^{1}(\Omega )}\right. +
\end{equation*}%
\begin{equation*}
+\left. \left\Vert w_{1}-\widehat{w}_{1}\right\Vert _{H^{-1}(\Omega
)}\right) ,\text{ \ \ }\forall t\geq 0.
\end{equation*}
\end{proof}

Thus by Theorem 2.1 and Theorem 2.2, it follows that by the formula $%
S(t)(w_{0},w_{1}) \newline
=(w(t),w_{t}(t))$, problem (1.1)-(1.3) generates a weakly continuous (in the
sense, if $\varphi _{n}\rightarrow \varphi $ strongly then $S(t)\varphi
_{n}\rightarrow S(t)\varphi $ weakly) semigroup $\left\{ S(t)\right\}
_{t\geq 0}$ in $\mathcal{H}$, where $w(t,\cdot)$ is a weak solution of
(1.1)-(1.3), determined by Theorem 2.1, with initial data $(w_{0},w_{1})$.
To show the strong continuity of $\left\{ S(t)\right\} _{t\geq 0}$ we
firstly prove the following lemma:

\begin{lemma}
Let $\varphi \in C(R)$ and $\left\vert \varphi (x)\right\vert \leq
c(1+\left\vert x\right\vert ^{r})$ for every $x\in R$ and some $r\geq 1$. If
$v_{n}\rightarrow v$ strongly in $L_{q}(\Omega )$ for $q\geq r$, then $%
\varphi (v_{n})\rightarrow \varphi (v)$ strongly in $L_{\frac{q}{r}}(\Omega
) $.
\end{lemma}

\begin{proof}
By the assumption of the lemma, there exists a subsequence $\left\{
v_{n_{k}}\right\} $ such that $v_{n_{k}}\rightarrow v$ a.e. in $\Omega $.
Then by Egorov's theorem, for any $\varepsilon >0$ there exists a subset $%
A_{\varepsilon }\subset \Omega $ such that $mes(A_{\varepsilon
})<\varepsilon $ and $v_{n_{k}}\rightarrow v$ uniformly in $\Omega
\backslash A_{\varepsilon }$. Hence for large enough $k$%
\begin{equation*}
\left\vert v_{n_{k}}(x)\right\vert \leq 1+\left\vert v(x)\right\vert \text{
\ in }\Omega \backslash A_{\varepsilon }
\end{equation*}%
and consequently
\begin{equation*}
\left\vert \varphi (v_{n_{k}}(x))\right\vert \leq c_{1}(1+\left\vert
v(x)\right\vert ^{r})\text{ \ in }\Omega \backslash A_{\varepsilon }\text{.}
\end{equation*}%
Applying Lebesgue's theorem we get
\begin{equation}
\underset{k\rightarrow \infty }{\lim }\left\Vert \varphi (v_{n_{k}})-\varphi
(v)\right\Vert _{L_{\frac{q}{r}}(\Omega \backslash A_{\varepsilon })}=0\text{%
.}  \tag{2.8}
\end{equation}%
On the other hand since we have
\begin{equation*}
\underset{k\rightarrow \infty }{\lim }\left\Vert v_{n_{k}}\right\Vert
_{L_{q}(A_{\varepsilon })}=\left\Vert v\right\Vert _{L_{q}(A_{\varepsilon })}%
\text{,}
\end{equation*}%
the inequality
\begin{equation*}
\underset{k\rightarrow \infty }{\lim \sup }\left\Vert \varphi
(v_{n_{k}})\right\Vert _{L_{\frac{q}{r}}(A_{\varepsilon })}^{\frac{q}{r}%
}<c_{3}(\varepsilon +\left\Vert v\right\Vert _{L_{q}(A_{\varepsilon })}^{q})
\end{equation*}%
is satisfied. The last inequality together with (2.8) implies that
\begin{equation*}
\underset{k\rightarrow \infty }{\lim \sup }\left\Vert \varphi
(v_{n_{k}})-\varphi (v)\right\Vert _{L_{\frac{q}{r}}(\Omega )}^{\frac{q}{r}%
}\leq c_{4}\underset{\varepsilon \rightarrow 0}{\lim }(\varepsilon
+\left\Vert v\right\Vert _{L_{q}(A_{\varepsilon })}^{q})=0\text{.}
\end{equation*}
\end{proof}

\begin{theorem}
Under conditions (2.1)-(2.3) the semigroup $\left\{ S(t)\right\} _{t\geq 0}$
is strongly continuous in $\mathcal{H}$.
\end{theorem}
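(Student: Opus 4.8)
The plan is to prove strong continuity of $\{S(t)\}_{t\geq 0}$ in $\mathcal{H}$, which amounts to showing that if $(w_0^n,w_1^n)\to(w_0,w_1)$ strongly in $\mathcal{H}$, then $(w^n(t),w_t^n(t))\to(w(t),w_t(t))$ strongly in $H_0^1(\Omega)\times L_2(\Omega)$ for each fixed $t$. We already know from the weak continuity established after Theorem 2.2 that $S(t)\varphi_n\rightharpoonup S(t)\varphi$ weakly. The standard strategy for upgrading weak to strong convergence in energy spaces for damped wave equations is the \emph{energy method}: since the norm is lower semicontinuous under weak convergence, we have $\liminf_n \|S(t)\varphi_n\|_{\mathcal{H}}\geq\|S(t)\varphi\|_{\mathcal{H}}$ automatically, so it suffices to prove the reverse inequality for the limsup of the energies, which combined with weak convergence yields strong convergence.

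First I would write the energy equality (2.4) for each solution $w^n$ with data $(w_0^n,w_1^n)$, and for the limit solution $w$. Denoting the energy-type functional $\mathcal{E}(w(t))=E(w(t))+\langle F(w(t)),1\rangle-\langle g,w(t)\rangle$, equation (2.4) reads
\begin{equation*}
\mathcal{E}(w^n(t))+\int_0^t\|\nabla w_t^n(\tau)\|_{L_2(\Omega)}^2\,d\tau+\int_0^t\langle\sigma(w^n(\tau))w_t^n(\tau),w_t^n(\tau)\rangle\,d\tau=\mathcal{E}(w^n(0)).
\end{equation*}
The idea is to pass to the limit as $n\to\infty$ in this identity. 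The right-hand side $\mathcal{E}(w^n(0))$ converges to $\mathcal{E}(w(0))$ because the data converge strongly in $\mathcal{H}$ and, using Lemma 2.1 (with $\varphi=F$, whose growth is of order $6$ by (2.1), handled via the Sobolev embedding $H_0^1(\Omega)\subset L_6(\Omega)$ in dimension $3$), the term $\langle F(w_0^n),1\rangle\to\langle F(w_0),1\rangle$.

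The crux is the treatment of the two dissipation integrals, since they are the terms preventing a naive limit. Both integrands are nonnegative, so each integral defines a weakly lower semicontinuous functional; by weak convergence of $w_t^n$ to $w_t$ in $L_2(0,T;H_0^1(\Omega))$ and Lemma 2.1 applied to $\sigma$ (growth order $4 < 5$, so $\sigma(w^n)\to\sigma(w)$ strongly in an appropriate $L_p$ space via strong convergence of $w^n$ in $L_q(\Omega)$ coming from compact Sobolev embedding in three dimensions), I would establish
\begin{equation*}
\liminf_{n\to\infty}\int_0^t\|\nabla w_t^n\|_{L_2}^2\,d\tau\geq\int_0^t\|\nabla w_t\|_{L_2}^2\,d\tau,\quad \liminf_{n\to\infty}\int_0^t\langle\sigma(w^n)w_t^n,w_t^n\rangle\,d\tau\geq\int_0^t\langle\sigma(w)w_t,w_t\rangle\,d\tau.
\end{equation*}
Combining these lower-semicontinuity bounds with the convergence of the data, passing to the liminf in the energy identity, and comparing against the energy equality for the limit solution $w$, I conclude $\limsup_n\mathcal{E}(w^n(t))\leq\mathcal{E}(w(t))$. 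Since the lower-order terms $\langle F(w^n(t)),1\rangle$ and $\langle g,w^n(t)\rangle$ converge (again by Lemma 2.1 and weak-then-strong convergence of $w^n(t)$ via compactness), this forces $\limsup_n E(w^n(t))\leq E(w(t))$, i.e.\ convergence of the $\mathcal{H}$-norms. Together with the weak convergence $(w^n(t),w_t^n(t))\rightharpoonup(w(t),w_t(t))$ in the Hilbert space $\mathcal{H}$, this yields strong convergence.

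The main obstacle I expect is the rigorous justification of the lower-semicontinuity inequality for the damping term $\int_0^t\langle\sigma(w^n)w_t^n,w_t^n\rangle\,d\tau$, because $\sigma$ is merely continuous and of critical growth, so one cannot differentiate it or treat it by compactness alone. The correct route is to exploit nonnegativity of $\sigma$ and write the integrand as $\|\sqrt{\sigma(w^n)}\,w_t^n\|_{L_2}^2$; then one shows $\sqrt{\sigma(w^n)}\,w_t^n\rightharpoonup\sqrt{\sigma(w)}\,w_t$ weakly in $L_2((0,t)\times\Omega)$, using Lemma 2.1 to get strong convergence of $\sqrt{\sigma(w^n)}$ in a suitable $L_p$ together with weak convergence of $w_t^n$, after which weak lower semicontinuity of the $L_2$-norm gives the inequality. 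Ensuring the integrability exponents match (so the product passes to the weak limit) is the delicate bookkeeping, and is precisely where the critical growth condition (2.3) and the three-dimensional Sobolev embeddings must be balanced.
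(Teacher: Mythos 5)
Your proposal is correct, but it takes a genuinely different route from the paper. The paper's proof is a direct perturbation/Gronwall argument: it writes the equation for the difference $u_n=w_n-w$, tests it by $u_{nt}$, uses the sign condition $\sigma\geq 0$ to discard the quadratic term $\langle\sigma(w_n)u_{nt},u_{nt}\rangle$ after splitting $\sigma(w_n)w_{nt}-\sigma(w)w_t=\sigma(w_n)u_{nt}+(\sigma(w_n)-\sigma(w))w_t$, and then closes the estimate with Gronwall's lemma, obtaining the explicit bound $E(u_n(T))\leq\bigl(E(u_n(0))+c\left\Vert\sigma(w_n)-\sigma(w)\right\Vert_{C([0,T];L_{3/2}(\Omega))}^{2}\bigr)e^{cT}$; the critical term is killed because Theorem 2.2 gives $w_n\to w$ in $C([0,T];L_{6}(\Omega))$ and then Lemma 2.1 (with $r=4$, $q=6$) gives $\sigma(w_n)\to\sigma(w)$ in $C([0,T];L_{3/2}(\Omega))$. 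Your argument is instead the soft energy method (weak convergence plus convergence of norms implies strong convergence in a Hilbert space), passing to the limit in the energy equality (2.4) and using weak lower semicontinuity of the two dissipation integrals via the factorization $\langle\sigma(w^n)w^n_t,w^n_t\rangle=\Vert\sigma^{1/2}(w^n)w^n_t\Vert_{L_2}^2$; this is legitimate, and amusingly it is exactly the device the paper itself uses later, in the proof of Lemma 3.1 (see (3.5)), for asymptotic compactness. What each approach buys: the paper's proof is shorter, quantitative (an explicit rate, uniform on $[0,T]$), and avoids any limit-identification issues; yours is qualitative but requires no perturbed equation and would survive in situations where a Gronwall estimate on differences is unavailable. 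Two small points of precision in your write-up: the strong convergence of $w^n$ in $C([0,T];L_q(\Omega))$ is best taken from Theorem 2.2 (which yields convergence in $C([0,T];H^{1}(\Omega))$, hence in $C([0,T];L_{6}(\Omega))$ at the critical exponent), rather than from compact Sobolev embeddings, which fail at $q=6$ and would anyway require an Aubin--Lions argument for uniformity in time; and the weak convergence $w^n_t\rightharpoonup w_t$ in $L_2(0,T;H_0^1(\Omega))$ should be justified from the uniform dissipation bound in (2.4) together with identification of the limit via Theorem 2.2 -- both are routine given the tools already established in Section 2, so neither is a genuine gap.
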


\begin{proof}
Let $(w_{0n},w_{1n})\rightarrow (w_{0},w_{1})$ strongly in $\mathcal{H}$.
Denoting $(w_{n}(t),w_{tn}(t))=$\newline
$S(t)(w_{0n},w_{1n})$, $(w(t),w_{t}(t))=S(t)(w_{0},w_{1})$ and $%
u_{n}(t)=w_{n}(t)-w(t)$, by (1.1) we have%
\begin{equation*}
u_{ntt}-\Delta u_{nt}+\sigma (w_{n})w_{nt}-\sigma (w)w_{t}-\Delta
u_{n}+f(w_{n}(\tau ))-f(w(t))=0\text{.}
\end{equation*}%
Since, by Theorem 2.1, every term of the above equation belongs to $%
L_{2}(0,T;H^{-1}(\Omega ))$, testing it by $u_{nt}$, we obtain%
\begin{equation*}
E(u_{n}(t))\leq E(u_{n}(0))+c\left\Vert \sigma (w_{n})-\sigma (w)\right\Vert
_{C([0,T];L_{\frac{3}{2}}(\Omega ))}^{2}+c\overset{t}{\underset{0}{\int }}%
E(u_{n}(s))ds\text{, }\forall t\in \lbrack 0,T]\text{.}
\end{equation*}%
Applying Gronwall's lemma we have
\begin{equation}
E(u_{n}(T))\leq \left( E(u_{n}(0))+c\left\Vert \sigma (w_{n})-\sigma
(w)\right\Vert _{C([0,T];L_{\frac{3}{2}}(\Omega ))}^{2}\right) e^{cT}\text{,
\ }\forall T\geq 0\text{.}  \tag{2.9}
\end{equation}
By Theorem 2.2, it follows that%
\begin{equation*}
\underset{n\rightarrow \infty }{\lim }\left\Vert w_{n}-w\right\Vert
_{C([0,T];L_{6}(\Omega ))}=0\text{.}
\end{equation*}%
Now applying Lemma 2.1 it is easy to see that
\begin{equation*}
\underset{n\rightarrow \infty }{\lim }\left\Vert \sigma (w_{n})-\sigma
(w)\right\Vert _{C([0,T];L_{\frac{3}{2}}(\Omega ))}=0\text{,}
\end{equation*}%
which together with (2.9) yields that $S(T)(w_{0n},w_{1n})\rightarrow
S(T)(w_{0},w_{1})$ strongly in $\mathcal{H}$, for every $T\geq 0$.
\end{proof}

Now let us recall the definition of a global attractor.\newline
\textbf{Definition (\cite{17}).} Let $\{V(t)\}_{{\small t\geq 0}}$ be a
semigroup on a metric space $(X,$ $d).$ A compact set $\mathcal{A}\subset X$
is called a global attractor for the semigroup $\left\{ V(t)\right\} _{t\geq
0}$ iff

$\bullet $ $\mathcal{A}$ is invariant, i.e. $V(t)\mathcal{A}=\mathcal{A},$ $%
\forall t\geq 0;$

$\bullet $ $\underset{t\rightarrow \infty }{\lim }$\ $\underset{v\in B}{\sup
}$ $\underset{u\in \mathcal{A}}{\inf }d(V(t)v,u)=0$ \ for each bounded set $%
B\subset X.$\newline

Our main result is as follows:

\begin{theorem}
Under the conditions (2.1)-(2.3), the semigroup $\{S(t)\}_{t\geq 0}$
generated by the problem (1.1)-(1.3) possesses a global attractor $\mathcal{A%
}$ in $\mathcal{H}$, which is also a global attractor in $\mathcal{H}%
_{1}:=(H^{2}(\Omega )\cap H_{0}^{1}(\Omega ))\times H_{0}^{1}(\Omega )$.%
\newline
\end{theorem}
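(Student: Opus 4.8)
The plan is standard in outline --- dissipativity together with asymptotic compactness yields the attractor in $\mathcal{H}$, and a regularity bootstrap then places it in $\mathcal{H}_1$ --- but each step must be engineered around the critical growth of $f$ and $\sigma$. First I would produce a bounded absorbing set in $\mathcal{H}$. Condition (2.2) together with the variational definition of $\lambda_1$ makes the functional $w\mapsto\|\nabla w\|_{L_2}^2+2\langle F(w),1\rangle-2\langle g,w\rangle$ coercive up to an additive constant; perturbing the energy by a small multiple of $\langle w_t,w\rangle$ and inserting the energy equality (2.4) gives a differential inequality $\frac{d}{dt}\mathcal{E}+\alpha\mathcal{E}\le C$, whose integration yields an absorbing ball $B_0\subset\mathcal{H}$.

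The decisive and, I expect, hardest step is asymptotic compactness on $B_0$. Because $f$ grows of order $5$ and $\sigma$ of order $4$ in three dimensions, the embedding $H_0^1\hookrightarrow L_6$ is not compact and the nonlinearities are not lower-order perturbations of the linear flow; the additional non-differentiability of $\sigma$ rules out the time-differentiation arguments used in the references. I would therefore use the energy (contractive-function) method. For two trajectories with data in $B_0$, testing the difference equation by the velocity difference and integrating the resulting energy identity over $[0,T]$ produces $E(u(T))\le Ce^{-\delta T}E(u(0))+\Phi_T$, with $\Phi_T$ collecting the contributions of $f(w_n)-f(w_m)$ and $\sigma(w_n)w_{nt}-\sigma(w_m)w_{mt}$. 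The core is to verify that $\Phi_T$ is a contractive function, $\liminf_n\liminf_m\Phi_T=0$ along any bounded sequence of trajectories; this I would obtain from the time-integrated smoothing $w_t\in L_2(0,T;H_0^1)$ of Theorem 2.1, the compact embeddings $H_0^1\hookrightarrow\hookrightarrow L_p$ for $p<6$, and Lemma 2.1 to pass to the limit in the critically growing compositions. Dissipativity, strong continuity (Theorem 2.3) and the abstract criterion of \cite{17} then deliver the global attractor $\mathcal{A}$ in $\mathcal{H}$.

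To reach $\mathcal{H}_1$ I would exploit the parabolic smoothing of the strong damping $-\Delta w_t$. Invariance of $\mathcal{A}$ lets me work with complete bounded trajectories, along which $w$ is bounded in $H_0^1$ and $w_t$ in $L_2$. Using time difference quotients rather than time derivatives (so as not to differentiate $\sigma$), I would first derive uniform bounds for $w_t$ in $H_0^1$ and for $w_{tt}$ in $L_2$ on $\mathcal{A}$. Reading (1.1) as the elliptic relation $-\Delta w=g-w_{tt}+\Delta w_t-\sigma(w)w_t-f(w)$ at each fixed time, I would then bootstrap the spatial regularity of $w$ up through fractional Sobolev scales until $w$ lies in $H^{3/2+\varepsilon}(\Omega)$. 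The choice of intermediate exponents, arranged so that the critically growing right-hand side stays in the correct $L_p$, is the technical heart of this part.

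Once $w\in H^{3/2+\varepsilon}(\Omega)\subset C(\overline{\Omega})$ on $\mathcal{A}$, the compositions $f(w)$ and $\sigma(w)$ are bounded in $L_\infty$, so the critical growth no longer obstructs anything; a final application of elliptic regularity gives $w\in H^2\cap H_0^1$ and $w_t\in H_0^1$, that is, $\mathcal{A}$ is bounded in $\mathcal{H}_1$. To conclude that $\mathcal{A}$ is the global attractor in $\mathcal{H}_1$ as well, I would observe that on a neighbourhood of $\mathcal{A}$ the nonlinearities are now effectively subcritical, repeat the absorbing-set and asymptotic-compactness arguments in the $\mathcal{H}_1$-topology for data in a bounded subset of $\mathcal{H}_1$, and combine the resulting $\mathcal{H}_1$-asymptotic compactness with the $\mathcal{H}$-attraction already established to upgrade convergence to the $\mathcal{H}_1$-metric.
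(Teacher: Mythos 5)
Your first half is sound and is essentially the paper's own route: asymptotic compactness via the energy (contractive-function) method applied to the difference of two trajectories, combined with the strict Lyapunov function $E(w)+\langle F(w),1\rangle-\langle g,w\rangle$ coming from (2.4), gives the attractor $\mathcal{A}$ in $\mathcal{H}$ (the paper tests the difference equation by $w_{kt}-w_{mt}+\tfrac{\lambda_1}{2}(w_k-w_m)$ and integrates the resulting identity in the starting time $s$, but this is the same mechanism you describe). The gap is in your regularity step. Your plan is to bound $w_{tt}$ in $L_2$ on $\mathcal{A}$ ``using time difference quotients rather than time derivatives (so as not to differentiate $\sigma$)'' and then bootstrap via the elliptic relation. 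Difference quotients do not evade the obstruction: the quotient of the damping term splits as
\begin{equation*}
\frac{\sigma(w(t+h))w_t(t+h)-\sigma(w(t))w_t(t)}{h}
=\sigma(w(t+h))\,\frac{w_t(t+h)-w_t(t)}{h}
+\frac{\sigma(w(t+h))-\sigma(w(t))}{h}\,w_t(t),
\end{equation*}
and the second term requires a Lipschitz-- or H\"older--type modulus for $\sigma$. Condition (2.3) provides only continuity and the growth bound $|\sigma(s)|\le c(1+|s|^4)$; $\sigma$ may be continuous and nowhere H\"older, so $h^{-1}[\sigma(w(t+h))-\sigma(w(t))]$ is not controllable in any norm. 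This is precisely the difficulty the paper singles out in the introduction as the reason the method of \cite{15} (regular estimate for $w_{tt}$, then asymptotic regularity of $-\Delta w_t-\Delta w+f(w)=g-w_{tt}$) does not apply here; your scheme is that method with cosmetic changes, so it fails for the same reason.

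What the paper does instead (Section 5) avoids ever estimating a time increment of $\sigma(w)$. Through each point of $\mathcal{A}_{\mathcal H}$ take a complete trajectory and split $w=u_k+v_k$ backward in time, where $v_k$ solves the equation with the \emph{truncated} nonlinearities $f_k,\sigma_k$ (cut off at level $k$) and zero data at time $s$, and $u_k$ carries the remainder. Since $f_k,\sigma_k$ are bounded, $v_k$ gains $\mathcal{H}_1$ regularity with the quantitative bound $r_0k^{128/65}$ (Lemma 5.1); because $128/65<2$, interpolation and $H^{3/2+\varepsilon}(\Omega)\subset C(\overline\Omega)$ give $\|v_k\|_{C(\overline\Omega)}\le\tfrac12 k$ for large $k$, so on the set $\{|w|>k\}$ one has $|u_k|>|v_k|$ and the untruncated-minus-truncated terms $\sigma-\sigma_k$, $f-f_k$ are controlled by powers of $\|\nabla u_k\|_{L_2}$ with small constants (using the uniform tail-smallness $\sup_t\int_{\{|w|>k\}}|w|^6\,dx\to0$ from compactness of $\mathcal{A}_{\mathcal H}$). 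This yields $u_{k_0}(t,s)\to0$ as $s\to-\infty$ (Lemma 5.2), hence $(w(T_{k_0}),w_t(T_{k_0}))\in\mathcal{H}_1$ with a uniform bound, which Lemma 4.1 propagates forward to give boundedness of $\mathcal{A}_{\mathcal H}$ in $\mathcal{H}_1$ and coincidence with $\mathcal{A}_{\mathcal{H}_1}$. Your closing step (attractor in the $\mathcal{H}_1$-topology for $\mathcal{H}_1$-bounded data) is in the right spirit, but the paper realizes it through the linear/compact splitting $S(t)=U(t)+C(t)$ with the decay and smoothing estimates (4.8)--(4.9), not by repeating the energy method.
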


\begin{remark}
We note that if the condition (2.3) is replaced by
\begin{equation*}
\sigma \in C(R)\text{, \ }\sigma (s)\geq 0\text{, }\left\vert \sigma
(s)\right\vert \leq c(1+\left\vert s\right\vert ^{p})\text{, }0\leq p<4\text{%
, }\forall s\in R\text{,}
\end{equation*}%
then using the methods of \cite{5} , \cite{19} and \cite{21} one can prove
Theorem 2.4. If we assume
\begin{equation*}
\sigma \in C^{1}(R)\text{, \ }\sigma (s)\geq 0\text{, }\left\vert \sigma
^{\prime }(s)\right\vert \leq c(1+\left\vert s\right\vert )\text{, }\forall
s\in R\text{,}
\end{equation*}%
instead of (2.3), then the method of \cite{15} can be applied to
(1.1)-(1.3). In this case, as in \cite{20}, one can show that a global
attractor $\mathcal{A}$ attracts every bounded subset of $\mathcal{H}$ in
the topology of $H_{0}^{1}(\Omega )\times H_{0}^{1}(\Omega )$.
\end{remark}

\begin{remark}
We also note that problem (1.1)-(1.3), in 3-D case, without the strong
damping $-\Delta w_{t}$ was considered in \cite{11} and \cite{16}. In this
case, when $\sigma (\cdot )$ is not globally bounded, the existence of a
global attractor in the strong topology of $\mathcal{H}$ and the regularity
of the weak attractor remain open (see \cite{11} and \cite{16} for details).
\end{remark}

\section{Existence of the global attractor in $\mathcal{H}$}

We start with the following asymptotic compactness lemma:

\begin{lemma}
Let conditions (2.1)-(2.3) hold and $B$ be a bounded subset of $\mathcal{H}$%
. Then every sequence of the form $\left\{ S(t_{n})\varphi _{n}\right\}
_{n=1}^{\infty }$, $\left\{ \varphi _{n}\right\} _{n=1}^{\infty }\subset B$,
$t_{n}\rightarrow \infty $, has a convergent subsequence in $\mathcal{H}$.
\end{lemma}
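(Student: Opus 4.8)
The plan is to establish dissipativity first and then asymptotic compactness by the energy method, passing to the limit in the critical nonlinear terms through Lemma 2.1. For dissipativity, testing (1.1) by $w_{t}+\varepsilon w$ for small $\varepsilon>0$ and using the energy equality (2.4), the sign $\sigma(s)\ge0$, and the coercivity furnished by (2.2) --- under which $\tfrac12\|\nabla w\|_{L_{2}(\Omega)}^{2}+\langle F(w),1\rangle-\langle g,w\rangle$ is bounded below and controls $\|\nabla w\|_{L_{2}(\Omega)}^{2}$ --- produces a functional $\mathcal{E}$ equivalent to $E(w)+\langle F(w),1\rangle-\langle g,w\rangle$ obeying $\tfrac{d}{dt}\mathcal{E}+\kappa\mathcal{E}\le C$, the cross term $\varepsilon\langle\sigma(w)w_{t},w\rangle$ being absorbed by the dissipation $\langle\sigma(w)w_{t},w_{t}\rangle$. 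This gives a bounded absorbing set $B_{0}\subset\mathcal{H}$. Writing $S(t_{n})\varphi_{n}=S(T)\zeta_{n}$ with $\zeta_{n}:=S(t_{n}-T)\varphi_{n}\in B_{0}$ for large $n$, and extracting diagonally in $T=1,2,\dots$ weakly convergent subsequences $\zeta_{n}\rightharpoonup\zeta$, weak continuity gives $S(T)\zeta_{n}\rightharpoonup S(T)\zeta$. As $\mathcal{H}$ is a Hilbert space, it then suffices to prove convergence of norms, i.e. $E(u_{n}(T))\to E(u(T))$, where $(u_{n},u_{nt})=S(\cdot)\zeta_{n}$ and $(u,u_{t})=S(\cdot)\zeta$.

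I would next collect the uniform bounds on $[0,T]$: $u_{n}$ is bounded in $L_{\infty}(0,T;H_{0}^{1}(\Omega))$ and, thanks to the strong damping, $u_{nt}$ in $L_{\infty}(0,T;L_{2}(\Omega))\cap L_{2}(0,T;H_{0}^{1}(\Omega))$; reading $u_{ntt}=\Delta u_{nt}-\sigma(u_{n})u_{nt}+\Delta u_{n}-f(u_{n})+g$ off (1.1) and estimating in $H^{-1}(\Omega)$, where $\sigma(u_{n})u_{nt}\in L_{2}(0,T;L_{6/5}(\Omega))$ and $f(u_{n})\in L_{\infty}(0,T;L_{6/5}(\Omega))$ by (2.1), (2.3) and $H_{0}^{1}(\Omega)\subset L_{6}(\Omega)$, bounds $u_{ntt}$ in $L_{2}(0,T;H^{-1}(\Omega))$. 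The Aubin--Lions--Simon lemma and the compact embeddings $H_{0}^{1}(\Omega)\subset\subset L_{q}(\Omega)$, $q<6$, then yield (for a subsequence) $u_{n}\to u$ in $C([0,T];L_{q}(\Omega))$ and $u_{nt}\to u_{t}$ in $L_{2}(0,T;L_{q}(\Omega))$ for every $q<6$, plus the weak convergences in the energy spaces. By Lemma 2.1, $\sigma(u_{n})\to\sigma(u)$ and $f(u_{n})\to f(u)$ strongly in $C([0,T];L_{s}(\Omega))$ for every $s<3/2$, resp. $s<6/5$, while (2.1), (2.3) keep them bounded in the critical spaces $L_{3/2}(\Omega)$, $L_{6/5}(\Omega)$.

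For the norm convergence I would use the energy equality (2.4) on $[0,T]$, which expresses $E(u_{n}(T))$ through the initial energy, the potential terms $\langle F(u_{n}(0)),1\rangle$, $\langle F(u_{n}(T)),1\rangle$, the linear terms $\langle g,u_{n}(0)\rangle$, $\langle g,u_{n}(T)\rangle$, and the dissipation integrals $\int_{0}^{T}\|\nabla u_{n\tau}\|_{L_{2}(\Omega)}^{2}\,d\tau$ and $\int_{0}^{T}\langle\sigma(u_{n})u_{n\tau},u_{n\tau}\rangle\,d\tau$. The limit solution $u$ satisfies its own equality (2.4); combining the weak lower semicontinuity of the dissipation integrals and of $E(u_{n}(T))$ with the equalities --- in the spirit of Ball's energy method, using $t_{n}\to\infty$ and the invariance of the weak limit to squeeze the two inequalities --- I expect to obtain $\lim_{n}E(u_{n}(T))=E(u(T))$, hence $S(T)\zeta_{n}\to S(T)\zeta$ strongly in $\mathcal{H}$ and the asserted asymptotic compactness.

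The hard part, and the whole point of the displacement-dependent damping, is the passage to the limit in the critical terms $\langle\sigma(u_{n})u_{n\tau},u_{n\tau}\rangle$ and in the critically growing potential $\langle F(u_{n}(\cdot)),1\rangle$: the growth orders $4$ for $\sigma$ and $5$ for $f$ place $\sigma(u_{n})\in L_{3/2}(\Omega)$ and $f(u_{n})\in L_{6/5}(\Omega)$ exactly at the exponents dual to $L_{6}(\Omega)$, so the merely subcritical strong convergence above, combined with H\"older, does not close the estimate, and $|u_{n}|^{6}$ need not be uniformly integrable. The device I would rely on is the smoothing of the strongly damped operator $\partial_{t}^{2}-\Delta\partial_{t}-\Delta$ --- whose velocity regularity $u_{nt}\in L_{2}(0,T;H_{0}^{1}(\Omega))$ and the $H^{-1}$-bound on $u_{ntt}$ already underpin the compactness above --- together with Lemma 2.1: wherever a product carries a frozen factor (as in the iterated limits produced by the energy method), that fixed $L_{6}(\Omega)$-function supplies, through the Egorov--Vitali argument of Lemma 2.1, the uniform integrability needed to upgrade the convergences of $\sigma(u_{n})$ and $f(u_{n})$ to the critical spaces $L_{6/5}(\Omega)$ and to pair them against the weak limits. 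Controlling the nondifferentiable, critically growing term $\sigma(w)w_{t}$ in this way is exactly the obstacle the paper is built to overcome, and I expect it to be the most delicate step.
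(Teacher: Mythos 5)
Your proposal assembles the right preliminary machinery (uniform bounds from the energy identity, time-shifted trajectories, Aubin--Lions compactness, and the convergences supplied by Lemma 2.1 --- all of which the paper also uses), but the core of the argument is missing, and you say so yourself: you ``expect'' Ball's energy method to close, and you offer as the key device the observation that products carrying a frozen $L_{6}$ factor can be passed to the limit. That device only handles the \emph{cross} terms (the analogues of the paper's (3.5)--(3.7)), which are indeed the easy part. The terms that actually obstruct Ball's method at the critical exponent are the self-interaction terms in the exponential-decay identity: testing by $w_{t}+\varepsilon w$ puts $\langle F(u_{n}(s)),1\rangle$, $\varepsilon\langle f(u_{n}(s)),u_{n}(s)\rangle$ and $\varepsilon\langle \widehat{\Sigma}(u_{n}(s)),1\rangle$ (with $\widehat{\Sigma}(w)=\int_{0}^{w}s\sigma(s)\,ds$, coming from $\langle\sigma(u_{n})u_{nt},u_{n}\rangle$) inside the Duhamel integral $\int_{0}^{T}e^{-\kappa(T-s)}(\cdot)\,ds$. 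These carry \emph{no} frozen factor; they behave like $|u_{n}|^{6}$, and with only $u_{n}\to u$ in $C([0,T];L_{6-\delta}(\Omega))$ their convergence would require uniform integrability of $|u_{n}|^{6}$, which is exactly what is unavailable. Fatou gives one-sided inequalities, but not all of these terms appear with the sign that Fatou favors (the $\widehat{\Sigma}$ term, which is nonnegative, enters with the wrong sign), and under (2.1)--(2.2) alone there is no structural inequality of the type $f(s)s\geq\theta F(s)-C$ that could rescue the sign bookkeeping. Your dissipativity step has a related flaw: absorbing $\varepsilon\langle\sigma(w)w_{t},w\rangle$ into $\langle\sigma(w)w_{t},w_{t}\rangle$ via Cauchy--Schwarz leaves $\varepsilon^{2}\langle\sigma(w)w,w\rangle\lesssim(1+\|\nabla w\|^{4})\|\nabla w\|^{2}$, which is cubic in the energy and cannot be absorbed by $\kappa\mathcal{E}$; fortunately this step is unnecessary, since the Lyapunov identity (2.4) together with (2.2) already bounds orbits of bounded sets, which is all the lemma needs.

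The paper's proof is built precisely to sidestep the obstruction above, and the mechanism is worth internalizing. Instead of comparing $u_{n}$ with the limit trajectory through an exponential-decay identity, it compares two elements of the sequence, $w_{k}$ and $w_{m}$, tests their difference equation by $(w_{kt}-w_{mt})+\frac{\lambda_{1}}{2}(w_{k}-w_{m})$, integrates over $(s,T)$ and then \emph{averages in $s$ over $(0,T)$}. The critical terms involving differences, such as $\langle F(w_{m}(s))-F(w(s)),1\rangle$ and the analogous $\widehat{\Sigma}$ terms, are then estimated not by uniform integrability but by the local Lipschitz property (2.1) against a single power of $\|w_{m}(s)-w(s)\|_{H_{0}^{1}(\Omega)}$ (Sobolev embedding, no compactness), and the resulting weighted integral $\varepsilon\int_{0}^{T}s\|w_{m}-w_{k}\|_{H_{0}^{1}}^{2}\,ds$ is \emph{absorbed} into the coercive term $\lambda_{1}\int_{0}^{T}sE(w_{k}-w_{m})\,ds$ that the displacement-difference multiplier puts on the left-hand side; the leftover is $O(\log T)$, so dividing by $T$ and letting $T\to\infty$ gives $\liminf_{m}\liminf_{k}E(w_{k}(T)-w_{m}(T))\to0$, after which a contradiction argument extracts a convergent subsequence. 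This absorption has no counterpart in your scheme: when you compare $u_{n}$ with the fixed limit $u$, the quantity $\int_{0}^{T}s\|u_{n}-u\|_{H_{0}^{1}}^{2}\,ds$ appears only on the bad side of the inequality, with nothing coercive to absorb it. To repair your proof you would either have to import the paper's difference-plus-absorption structure (at which point it ceases to be Ball's method) or impose subcritical growth, which defeats the purpose of the paper.
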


\begin{proof}
By (2.4), we have
\begin{equation}
\left\{
\begin{array}{c}
\underset{t\geq 0}{\sup }\underset{\varphi \in B}{\sup }\left\Vert
S(t)\varphi \right\Vert _{\mathcal{H}}<\infty, \\
\underset{\varphi \in B}{\sup }\underset{0}{\overset{\infty }{\int }}%
\left\Vert PS(t)\varphi \right\Vert _{H_{0}^{1}(\Omega )}^{2}dt<\infty,%
\end{array}%
\right.  \tag{3.1}
\end{equation}%
where $P:\mathcal{H\rightarrow }L_{2}(\Omega )$ is a projection map, i.e. $%
P\varphi =\varphi _{2}$, for every $\varphi =(\varphi _{1},\varphi _{2})\in
\mathcal{H}$. So for any $T_{0}\geq 1$ there exists a subsequence $\left\{
n_{k}\right\} _{k=1}^{\infty }$ such that $t_{n_{k}}\geq T_{0}$ and
\begin{equation}
\left\{
\begin{array}{c}
w_{k}\rightarrow w\text{ \ weakly star in }L_{\infty }(0,\infty
;H_{0}^{1}(\Omega )), \\
w_{kt}\rightarrow w_{t}\text{ \ \ \ \ \ weakly in }L_{2}(0,\infty
;H_{0}^{1}(\Omega )),%
\end{array}%
\right. \text{ }  \tag{3.2}
\end{equation}%
for some $w\in L_{\infty }(0,\infty ;H_{0}^{1}(\Omega ))\cap W^{1,\infty
}(0,\infty ;L_{2}(\Omega ))\cap W_{loc}^{1,2}(0,\infty ;H_{0}^{1}(\Omega ))$%
, where $(w_{k}(t),w_{kt}(t))=S(t+t_{n_{k}}-T_{0})\varphi _{n_{k}}$. Now
multiplying the equality
\begin{equation*}
(w_{k}-w_{m})_{tt}-\Delta (w_{kt}-w_{mt})+\sigma (w_{k})w_{kt}-\sigma
(w_{m})w_{mt}-\Delta (w_{k}-w_{m})+\text{\ }
\end{equation*}%
\begin{equation*}
+f(w_{k})-f(w_{m})=0
\end{equation*}%
by $(w_{kt}-w_{mt}+\frac{\lambda _{1}}{2}(w_{k}-w_{m}))$ and integrating
over $(s,T)\times \Omega $, we obtain%
\begin{equation*}
\frac{1}{2}E(w_{k}(T)-w_{m}(T))+\lambda _{1}\underset{s}{\overset{T}{\int }}%
E(w_{k}(t)-w_{m}(t))dt+
\end{equation*}%
\begin{equation*}
+\underset{s}{\overset{T}{\int }}\left\langle \sigma
(w_{k}(t))w_{kt}(t)-\sigma
(w_{m}(t))w_{mt}(t),w_{kt}(t)-w_{mt}(t)\right\rangle dt+
\end{equation*}%
\begin{equation*}
+\frac{\lambda _{1}}{2}\left\langle \widehat{\Sigma }(w_{k}(T))+\widehat{%
\Sigma }(w_{m}(T)),1\right\rangle -\frac{\lambda _{1}}{2}\underset{s}{%
\overset{T}{\int }}\left\langle \sigma
(w_{k}(t))w_{kt}(t),w_{m}(t)\right\rangle dt
\end{equation*}%
\begin{equation*}
-\frac{\lambda _{1}}{2}\underset{s}{\overset{T}{\int }}\left\langle \sigma
(w_{m}(t))w_{mt}(t),w_{k}(t)\right\rangle dt+\left\langle
F(w_{k}(T))+F(w_{m}(T)),1\right\rangle -
\end{equation*}%
\begin{equation*}
-\underset{s}{\overset{T}{\int }}\left\langle
f(w_{k}(t)),w_{mt}(t)\right\rangle dt-\underset{s}{\overset{T}{\int }}%
\left\langle f(w_{m}(t)),w_{kt}(t)\right\rangle dt+
\end{equation*}%
\begin{equation*}
+\frac{\lambda _{1}}{2}\underset{s}{\overset{T}{\int }}\left\langle
f(w_{k}(t))-f(w_{m}(t)),w_{k}(t)-w_{m}(t)\right\rangle dt\leq
\end{equation*}%
\begin{equation*}
\leq (\frac{3}{2}+\lambda _{1})E(w_{k}(s)-w_{m}(s))+\frac{\lambda _{1}}{2}%
\left\langle \widehat{\Sigma }(w_{k}(s))+\widehat{\Sigma }%
(w_{m}(s)),1\right\rangle +
\end{equation*}%
\begin{equation*}
+\left\langle F(w_{k}(s))+F(w_{m}(s)),1\right\rangle ,\ \ \ 0\leq s\leq T,
\end{equation*}%
where $\widehat{\Sigma }(w)=\underset{0}{\overset{w}{\int }}s\sigma (s)ds$.
Integrating the last inequality with respect to $s$ from $0$ to $T$ \ we find%
\begin{equation*}
\frac{T}{2}E(w_{k}(T)-w_{m}(T))+\lambda _{1}\underset{0}{\overset{T}{\int }}%
sE(w_{k}(s)-w_{m}(s))ds+
\end{equation*}%
\begin{equation*}
+\underset{0}{\overset{T}{\int }s}\left\langle \sigma
(w_{k}(s))w_{kt}(s)-\sigma
(w_{m}(s))w_{mt}(s),w_{kt}(s)-w_{mt}(s)\right\rangle ds+
\end{equation*}%
\begin{equation*}
+\frac{\lambda _{1}}{2}T\left\langle \widehat{\Sigma }(w_{k}(T))+\widehat{%
\Sigma }(w_{m}(T)),1\right\rangle -\frac{\lambda _{1}}{2}\underset{0}{%
\overset{T}{\int }}s\left\langle \sigma
(w_{k}(s))w_{kt}(s),w_{m}(s)\right\rangle ds
\end{equation*}%
\begin{equation*}
-\frac{\lambda _{1}}{2}\underset{0}{\overset{T}{\int }}s\left\langle \sigma
(w_{m}(s))w_{mt}(s),w_{k}(s)\right\rangle ds+T\left\langle
F(w_{k}(T))+F(w_{m}(T)),1\right\rangle -
\end{equation*}%
\begin{equation*}
-\underset{0}{\overset{T}{\int }}s\left\langle
f(w_{k}(s)),w_{mt}(s)\right\rangle ds-\underset{0}{\overset{T}{\int }}%
s\left\langle f(w_{m}(s)),w_{kt}(s)\right\rangle ds+
\end{equation*}%
\begin{equation*}
+\frac{\lambda _{1}}{2}\underset{0}{\overset{T}{\int }}s\left\langle
f(w_{k}(s))-f(w_{m}(s)),w_{k}(s)-w_{m}(s)\right\rangle dt\leq
\end{equation*}%
\begin{equation*}
\leq (\frac{3}{2}+\lambda _{1})\underset{0}{\overset{T}{\int }}%
E(w_{k}(s)-w_{m}(s))ds+\underset{0}{\overset{T}{\int }}\left\langle
F(w_{k}(s))+\frac{\lambda _{1}}{2}\widehat{\Sigma }(w_{k}(s)),1\right\rangle
ds+
\end{equation*}%
\begin{equation}
+\underset{0}{\overset{T}{\int }}\left\langle F(w_{m}(s))+\frac{\lambda _{1}%
}{2}\widehat{\Sigma }(w_{m}(s)),1\right\rangle ds,\text{ \ \ \ }\forall
T\geq 0.  \tag{3.3}
\end{equation}%
By (3.1)$_{1}$, it follows that
\begin{equation*}
(\frac{3}{2}+\lambda _{1})\underset{0}{\overset{T}{\int }}%
E(w_{k}(s)-w_{m}(s))ds\leq c_{1}+
\end{equation*}%
\begin{equation}
+\frac{\lambda _{1}}{2}\underset{\frac{3+2\lambda _{1}}{\lambda _{1}}}{%
\overset{T}{\int }}sE(w_{k}(s)-w_{m}(s))ds,\text{\ \ }\forall T\geq \frac{%
3+2\lambda _{1}}{\lambda _{1}}.  \tag{3.4}
\end{equation}%
Since for every $\varepsilon >0$ the embedding $H^{1}(\Omega )\subset
H^{1-\varepsilon }(\Omega )$ is compact (see for example \cite[Theorem 16.1]%
{12}), applying \cite[Corollary 1]{18} to (3.2), we have
\begin{equation*}
w_{k}\rightarrow w\text{\ \ strongly in }C([0,T];H^{1-\varepsilon }(\Omega
)).
\end{equation*}%
Applying Lemma 2.1 it yields that
\begin{equation*}
\left\{
\begin{array}{c}
\sigma (w_{k})\rightarrow \sigma (w)\text{ \ strongly in }C([0,T];L_{\frac{3%
}{2}-\varepsilon }(\Omega )), \\
\sigma ^{\frac{1}{2}}(w_{k})\rightarrow \sigma ^{\frac{1}{2}}(w)\text{ \
strongly in }C([0,T];L_{3-\varepsilon }(\Omega )),%
\end{array}%
\right.
\end{equation*}%
for small enough $\varepsilon >0$. The last approximation together with
(2.3) and (3.2)$_{2}$ implies that%
\begin{equation*}
\left\{
\begin{array}{c}
\sigma (w_{k})w_{kt}\rightarrow \sigma (w)w_{t}\text{ \ weakly in }%
L_{2}([0,T];L_{\frac{6}{5}}(\Omega )), \\
\sigma ^{\frac{1}{2}}(w_{k})w_{kt}\rightarrow \sigma ^{\frac{1}{2}}(w)w_{t}%
\text{ \ weakly in }L_{2}([0,T];L_{2}(\Omega )),%
\end{array}%
\right.
\end{equation*}%
by which we obtain
\begin{equation*}
\underset{m\rightarrow \infty }{\lim \inf }\underset{k\rightarrow \infty }{%
\lim \inf }\underset{0}{\overset{T}{\int }}s\left\langle \sigma
(w_{k}(s))w_{kt}(s)-\sigma
(w_{m}(s))w_{mt}(s),w_{kt}(s)-w_{mt}(s)\right\rangle ds=
\end{equation*}%
\begin{equation*}
=\underset{k\rightarrow \infty }{\lim \inf }\underset{0}{\overset{T}{\int }}%
s\left\Vert \sigma ^{\frac{1}{2}}(w_{k}(s))w_{kt}(s)\right\Vert
_{L_{2}(\Omega )}^{2}ds+\underset{m\rightarrow \infty }{\lim \inf }\underset{%
0}{\overset{T}{\int }}s\left\Vert \sigma ^{\frac{1}{2}}(w_{m}(s))w_{mt}(s)%
\right\Vert _{L_{2}(\Omega )}^{2}ds-
\end{equation*}%
\begin{equation}
-2\underset{0}{\overset{T}{\int }}s\left\Vert \sigma ^{\frac{1}{2}%
}(w(s))w_{t}(s)\right\Vert _{L_{2}(\Omega )}^{2}ds\geq 0,  \tag{3.5}
\end{equation}%
\begin{equation*}
\underset{m\rightarrow \infty }{\lim }\underset{k\rightarrow \infty }{\lim }%
\underset{0}{\overset{T}{\int }}s\left\langle \sigma
(w_{k}(s))w_{kt}(s),w_{m}(s)\right\rangle ds=\underset{0}{\overset{T}{\int }}%
s\left\langle \sigma (w(s))w_{t}(s),w(s)\right\rangle ds=
\end{equation*}%
\begin{equation}
=T\underset{0}{\overset{T}{\int }}\left\langle \widehat{\Sigma }%
(w(s)),1\right\rangle ds-\underset{0}{\overset{T}{\int }}\left\langle
\widehat{\Sigma }(w(s)),1\right\rangle ds  \tag{3.6}
\end{equation}%
and
\begin{equation*}
\underset{m\rightarrow \infty }{\lim }\underset{k\rightarrow \infty }{\lim }%
\underset{0}{\overset{T}{\int }}s\left\langle \sigma
(w_{m}(s))w_{mt}(s),w_{k}(s)\right\rangle ds=\underset{0}{\overset{T}{\int }}%
s\left\langle \sigma (w(s))w_{t}(s),w(s)\right\rangle ds=
\end{equation*}%
\begin{equation}
=T\underset{0}{\overset{T}{\int }}\left\langle \widehat{\Sigma }%
(w(s)),1\right\rangle ds-\underset{0}{\overset{T}{\int }}\left\langle
\widehat{\Sigma }(w(s)),1\right\rangle ds  \tag{3.7}
\end{equation}%
Also applying Fatou's lemma and using (2.1), (2.2), (2.3), (3.2), we have%
\begin{equation}
\left\{
\begin{array}{c}
\underset{k\rightarrow \infty }{\lim \inf }\left\langle \widehat{\Sigma }%
(w_{k}(T)),1\right\rangle \geq \left\langle \widehat{\Sigma }%
(w(T)),1\right\rangle , \\
\underset{k\rightarrow \infty }{\lim \inf }\left\langle
F(w_{k}(T)),1\right\rangle \geq \left\langle F(w(T)),1\right\rangle , \\
\underset{k\rightarrow \infty }{\lim \inf }\underset{0}{\overset{T}{\int }}%
s\left\langle f(w_{k}(s)),w_{k}(s)\right\rangle ds\geq \underset{0}{\overset{%
T}{\int }}s\left\langle f(w(s)),w(s)\right\rangle ds.%
\end{array}%
\right.  \tag{3.8}
\end{equation}%
Taking into account (3.4)-(3.8) in (3.3), we obtain%
\begin{equation*}
\frac{T}{2}\underset{m\rightarrow \infty }{\lim \inf }\underset{k\rightarrow
\infty }{\lim \inf }E(w_{k}(T)-w_{m}(T))+\frac{\lambda _{1}}{2}\underset{%
m\rightarrow \infty }{\lim \inf }\underset{k\rightarrow \infty }{\lim \inf }%
\underset{0}{\overset{T}{\int }}sE(w_{k}(s)-w_{m}(s))ds\leq c_{1}+
\end{equation*}%
\begin{equation}
+2\underset{k\rightarrow \infty }{\lim \inf }\underset{0}{\overset{T}{\int }}%
\left\langle F(w_{k}(s))+\frac{\lambda _{1}}{2}\widehat{\Sigma }%
(w_{k}(s))-F(w(s))-\frac{\lambda _{1}}{2}\widehat{\Sigma }%
(w(s)),1\right\rangle ds,\text{ }  \tag{3.9}
\end{equation}%
for $T\geq \frac{3+2\lambda _{1}}{\lambda _{1}}$. Now let us estimate the
right hand side of (3.9). By (2.1),\ (3.1)$_{1}$ and (3.2), we find that%
\begin{equation*}
\underset{0}{\overset{T}{\int }}\left\vert \left\langle
F(w_{m}(s))-F(w(s)),1\right\rangle \right\vert ds\leq c_{2}\underset{0}{%
\overset{T}{\int }}\left\Vert w_{m}(s)-w(s)\right\Vert _{H_{0}^{1}(\Omega
)}ds\leq c_{3}+c_{4}(\varepsilon )\log (T)+
\end{equation*}%
\begin{equation*}
+\varepsilon \underset{1}{\overset{T}{\int }}s\left\Vert
w_{m}(s)-w(s)\right\Vert _{H_{0}^{1}(\Omega )}^{2}ds\leq
c_{3}+c_{4}(\varepsilon )\log (T)+
\end{equation*}%
\begin{equation}
+\varepsilon \underset{k\rightarrow \infty }{\lim \inf }\underset{0}{\overset%
{T}{\int }}s\left\Vert w_{m}(s)-w_{k}(s)\right\Vert _{H_{0}^{1}(\Omega
)}^{2}ds,\text{ \ \ \ }\forall T\geq 1,\text{ \ }\forall \varepsilon >0.
\tag{3.10}
\end{equation}%
By the same way, we have
\begin{equation*}
\underset{0}{\overset{T}{\int }}\left\vert \left\langle \widehat{\Sigma }%
(w_{m}(s))-\widehat{\Sigma }(w(s)),1\right\rangle \right\vert ds\leq
c_{5}+c_{6}(\varepsilon )\log (T)+
\end{equation*}%
\begin{equation}
+\varepsilon \underset{k\rightarrow \infty }{\lim \inf }\underset{0}{\overset%
{T}{\int }}s\left\Vert w_{m}(s)-w_{k}(s)\right\Vert _{H_{0}^{1}(\Omega
)}^{2}ds,\text{ \ \ \ }\forall T\geq 1,\text{ \ }\forall \varepsilon >0.
\tag{3.11}
\end{equation}%
Now, choosing $\varepsilon $ small enough, by (3.9)-(3.11), we obtain%
\begin{equation*}
\underset{m\rightarrow \infty }{\lim \inf }\underset{k\rightarrow \infty }{%
\lim \inf }E(w_{k}(T)-w_{m}(T))\leq \frac{c_{7}(1+\log (T))}{T},\text{ \ }%
\forall T\geq \max \left\{ 1,\frac{3+2\lambda _{1}}{\lambda _{1}}\right\} .
\end{equation*}%
Choosing $T=T_{0}$ in the last inequality we find%
\begin{equation*}
\underset{n\rightarrow \infty }{\lim \inf }\underset{m\rightarrow \infty }{%
\lim \inf }\left\Vert S(t_{n})\varphi _{n}-S(t_{m})\varphi _{m}\right\Vert _{%
\mathcal{H}}\leq c_{8}\sqrt{\frac{(1+\log (T_{0}))}{T_{0}}},\text{ \ }
\end{equation*}%
and passing to the limit as $T_{0}\rightarrow \infty $ we have%
\begin{equation*}
\underset{n\rightarrow \infty }{\lim \inf }\underset{m\rightarrow \infty }{%
\lim \inf }\left\Vert S(t_{n})\varphi _{n}-S(t_{m})\varphi _{m}\right\Vert _{%
\mathcal{H}}=0.
\end{equation*}%
Similarly one can show that%
\begin{equation}
\underset{k\rightarrow \infty }{\lim \inf }\underset{m\rightarrow \infty }{%
\lim \inf }\left\Vert S(t_{n_{k}})\varphi _{n_{k}}-S(t_{n_{m}})\varphi
_{n_{m}}\right\Vert _{\mathcal{H}}=0,  \tag{3.12}
\end{equation}%
for every subsequence $\left\{ n_{k}\right\} _{k=1}^{\infty }$. Now if the
sequence $\left\{ S(t_{n})\varphi _{n}\right\} _{n=1}^{\infty }$ has no
convergent subsequence in $\mathcal{H}$, then there exist $\varepsilon
_{0}>0 $ and a subsequence $\left\{ n_{k}\right\} _{k=1}^{\infty }$, such
that
\begin{equation*}
\left\Vert S(t_{n_{k}})\varphi _{n_{k}}-S(t_{n_{m}})\varphi
_{n_{m}}\right\Vert _{\mathcal{H}}\geq \varepsilon _{0},\text{ \ }k\neq m.
\end{equation*}%
The last inequality contradicts (3.12).
\end{proof}

Now since, by (2.4), the problem (1.1)-(1.3) has a strict Lyapunov function $%
L(w(t)):=E(w(t))+ \left\langle F(w(t)),1\right\rangle-\left\langle
g,w(t)\right\rangle$, according to [4, Corollary 2.29] we have the following
theorem:

\begin{theorem}
Under conditions (2.1)-(2.3), the semigroup $\{S(t)\}_{t\geq 0}$ possesses a
global attractor $\mathcal{A}_{\mathcal{H}}$ in $\mathcal{H}$.
\end{theorem}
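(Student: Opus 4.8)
The plan is to obtain the attractor by invoking the abstract theory of gradient dynamical systems, exactly as the sentence preceding the statement announces via \cite[Corollary 2.29]{4}. The semigroup $\{S(t)\}_{t\geq 0}$ acts on the complete metric space $\mathcal{H}$ and is (strongly) continuous by Theorem 2.3, so it is a genuine continuous dynamical system. To apply the abstract corollary I would verify its three structural hypotheses: (i) asymptotic compactness (equivalently, asymptotic smoothness); (ii) the existence of a strict Lyapunov function that is bounded above on bounded sets and whose sublevel sets are bounded; and (iii) boundedness of the set of stationary points. Hypothesis (i) is precisely the content of the already-established Lemma 3.1, so the remaining work is to check (ii) and (iii) and then quote the corollary.

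For (ii) I would first confirm that $L(w(t))=E(w(t))+\left\langle F(w(t)),1\right\rangle -\left\langle g,w(t)\right\rangle$ is a strict Lyapunov functional. The energy equality (2.4) rewrites as
\[
L(w(t))=L(w(s))-\int_{s}^{t}\left\Vert \nabla w_{t}(\tau )\right\Vert _{L_{2}(\Omega )}^{2}\,d\tau -\int_{s}^{t}\left\langle \sigma (w(\tau ))w_{t}(\tau ),w_{t}(\tau )\right\rangle d\tau ,
\]
and since $\sigma \geq 0$ by (2.3) both integrals are nonnegative, so $t\mapsto L(w(t))$ is nonincreasing; continuity of $L$ on $\mathcal{H}$ follows from (2.1) and the embedding $H_{0}^{1}(\Omega )\subset L_{6}(\Omega )$. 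If $L(w(t))$ is constant in $t$, then $\int_{0}^{t}\left\Vert \nabla w_{t}\right\Vert _{L_{2}(\Omega )}^{2}d\tau \equiv 0$, forcing $w_{t}\equiv 0$, i.e. $w$ is an equilibrium, which is the strictness. To control the sublevel sets I would use (2.2): there exist $\mu <\lambda _{1}$ and $C$ with $F(s)\geq -\tfrac{\mu }{2}s^{2}-C$, whence $\left\langle F(w),1\right\rangle \geq -\tfrac{\mu }{2\lambda _{1}}\left\Vert \nabla w\right\Vert _{L_{2}(\Omega )}^{2}-C\lvert \Omega \rvert$, while $-\left\langle g,w\right\rangle $ is absorbed by Young's and Poincar\'{e}'s inequalities. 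Since $\mu /\lambda _{1}<1$, this produces a coercive lower bound $L(\varphi )\geq c_{0}\left\Vert \varphi \right\Vert _{\mathcal{H}}^{2}-C$ with $c_{0}>0$, so the sets $\{L\leq R\}$ are bounded; the growth condition (2.1) gives the complementary fact that $L$ is bounded above on bounded sets.

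For (iii) the stationary points $(w,0)$ are the weak solutions of $-\Delta w+f(w)=g$. Testing this equation by $w$ and using the same dissipativity consequence of (2.2), now in the form $f(s)s\geq -\mu s^{2}-C$, together with Poincar\'{e}'s inequality yields $(1-\mu /\lambda _{1})\left\Vert \nabla w\right\Vert _{L_{2}(\Omega )}^{2}\leq C+\left\Vert g\right\Vert _{L_{2}(\Omega )}\left\Vert w\right\Vert _{L_{2}(\Omega )}$, which bounds $\left\Vert \nabla w\right\Vert _{L_{2}(\Omega )}$ and hence bounds the equilibrium set in $\mathcal{H}$.

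With (i)--(iii) in place, \cite[Corollary 2.29]{4} delivers a compact global attractor $\mathcal{A}_{\mathcal{H}}$ (and in fact identifies it with the unstable manifold of the equilibrium set). I do not anticipate a genuine obstacle: the one substantive analytic ingredient, asymptotic compactness, has already been furnished by Lemma 3.1, and the remaining items are routine consequences of the sign and growth conditions (2.1)--(2.3). The only point demanding slight care is the coercivity estimate for $L$, where one must ensure that the constant $\mu$ extracted from the $\liminf $ condition (2.2) stays strictly below $\lambda _{1}$, so that the $\left\Vert \nabla w\right\Vert _{L_{2}(\Omega )}^{2}$ term keeps a strictly positive coefficient.
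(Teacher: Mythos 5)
Your proposal is correct and follows essentially the same route as the paper: asymptotic compactness from Lemma 3.1, the strict Lyapunov function $L(w(t))=E(w(t))+\left\langle F(w(t)),1\right\rangle -\left\langle g,w(t)\right\rangle$ coming from the energy equality (2.4) and $\sigma \geq 0$, and then the abstract gradient-system result \cite[Corollary 2.29]{4}. The only difference is that you spell out the routine verifications (coercivity of $L$ via (2.2), boundedness of the equilibrium set, strictness) that the paper leaves implicit, which is a faithful completion rather than a different argument.
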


\section{Existence of the global attractor in $\mathcal{H}_{1}$}

To prove the existence of a global attractor in $\mathcal{H}_{1}$ we need
the following lemmas:

\begin{lemma}
Let conditions (2.1)-(2.3) hold and $B$ be a bounded subset of $\mathcal{H}%
_{1}$. Then%
\begin{equation}
\underset{t\geq 0}{\sup }\underset{\varphi \in B}{\sup }\left\Vert
S(t)\varphi \right\Vert _{\mathcal{H}_{1}}<\infty .  \tag{4.1}
\end{equation}
\end{lemma}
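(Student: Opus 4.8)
The goal is to show that $\mathcal{H}_1$-bounded data yields uniformly $\mathcal{H}_1$-bounded orbits. Since we already control the $\mathcal{H}$-norm uniformly via the energy equality (2.4), the real task is to propagate the higher regularity. The plan is to obtain a differential inequality for a higher-order energy functional built from the differentiated equation, and then to close it using the already-available $\mathcal{H}$-bound together with the critical growth assumptions (2.1) and (2.3).

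\textbf{Step 1: choice of the higher-order energy.} For data in $\mathcal{H}_1=(H^2\cap H_0^1)\times H_0^1$, the natural quantity to control is
\begin{equation*}
\Phi(t):=\tfrac12\left(\left\Vert \nabla w_t(t)\right\Vert_{L_2}^2+\left\Vert \Delta w(t)\right\Vert_{L_2}^2\right),
\end{equation*}
which is equivalent to $\left\Vert S(t)\varphi\right\Vert_{\mathcal{H}_1}^2$ modulo the already-bounded lower-order terms. Formally I would either differentiate (1.1) in $t$ and test by $w_{tt}$, or, more robustly in the critical/nonsmooth setting (recall $\sigma$ is only continuous, $f$ only $C$), test (1.1) itself by $-\Delta w_t$ and integrate by parts. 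Testing by $-\Delta w_t$ is preferable because it avoids differentiating $\sigma(w)$ and $f(w)$, which is exactly where the critical exponent causes trouble. This yields, after integration by parts,
\begin{equation*}
\frac{d}{dt}\Phi(t)+\left\Vert \Delta w_t(t)\right\Vert_{L_2}^2 = \left\langle \sigma(w)w_t,\Delta w_t\right\rangle + \left\langle f(w),\Delta w_t\right\rangle - \left\langle g,\Delta w_t\right\rangle.
\end{equation*}

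\textbf{Step 2: estimate the right-hand side using critical Sobolev embeddings in 3-D.} The three source terms must be absorbed into $\left\Vert \Delta w_t\right\Vert_{L_2}^2$ plus a term controllable by $\Phi$ and the $\mathcal{H}$-bound. For the nonlinear source, $\left\Vert f(w)\right\Vert_{L_2}$ is finite and bounded in terms of $\left\Vert w\right\Vert_{L_{10}}$ (since $|f(w)|\le c(1+|w|^5)$ from (2.1)), and $\left\Vert w\right\Vert_{L_{10}}$ is controlled by $\left\Vert \Delta w\right\Vert_{L_2}$ via $H^2\hookrightarrow L_\infty$ in 3-D, hence by $\sqrt{\Phi}$. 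For the damping term, $\left\Vert \sigma(w)w_t\right\Vert_{L_2}$ is estimated by Hölder: $\left\Vert \sigma(w)\right\Vert_{L_3}\left\Vert w_t\right\Vert_{L_6}$, where $\left\Vert \sigma(w)\right\Vert_{L_3}\le c(1+\left\Vert w\right\Vert_{L_{12}}^4)$ stays bounded by $H^2\hookrightarrow C(\overline\Omega)$, and $\left\Vert w_t\right\Vert_{L_6}\le c\left\Vert \nabla w_t\right\Vert_{L_2}\le c\sqrt{\Phi}$. Then each inner product $\left\langle\,\cdot\,,\Delta w_t\right\rangle$ is handled by Cauchy--Schwarz and Young's inequality $ab\le \tfrac12\left\Vert \Delta w_t\right\Vert_{L_2}^2+c\,\|\cdot\|_{L_2}^2$, absorbing the dissipation term on the left and leaving a right-hand side polynomial in $\Phi$.

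\textbf{Step 3: close the inequality.} The delicate point is that naively Step 2 produces a right-hand side like $c(1+\Phi)$ or worse, giving only exponential-in-time growth, which suffices for boundedness on $[0,T]$ but \emph{not} the uniform-in-$t$ bound (4.1) demands. The resolution is to integrate the $\mathcal{H}$-dissipation: from (3.1), $\int_0^\infty\left\Vert \nabla w_t\right\Vert_{L_2}^2\,dt<\infty$, so $\int_0^\infty\sqrt{\Phi}$-type weights are integrable along the orbit. I would aim for an inequality of the form $\Phi'(t)\le h(t)\Phi(t)+h(t)$ with $\int_0^\infty h(t)\,dt<\infty$, where $h$ collects the integrable dissipative factors $\left\Vert \nabla w_t\right\Vert_{L_2}^2$; a uniform Gronwall argument then yields the time-independent bound. \textbf{The main obstacle is precisely this closure at the critical exponent:} making the bad nonlinear terms carry an integrable-in-time weight rather than a constant, and doing so while $\sigma$ and $f$ are merely continuous (so one must work with the weak formulation and justify the test by $-\Delta w_t$ via the Galerkin approximation and the regularity guaranteed by Theorem 2.1, rather than assuming enough smoothness to differentiate the nonlinearities). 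Controlling $\left\Vert \sigma(w)\right\Vert_{L_3}$ uniformly through $H^2\hookrightarrow C(\overline\Omega)$ is the key structural fact that makes the critical growth $|\sigma(s)|\le c(1+|s|^4)$ tractable here.
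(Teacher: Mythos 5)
Your Step 1 coincides with the paper's starting point (testing (1.1) by $-\Delta w_t$), and you correctly identify that the whole difficulty is closing the resulting inequality uniformly in time. But the closure you sketch cannot work, for a concrete structural reason: the term $\left\Vert f(w)\right\Vert_{L_2}^2$ on the right-hand side carries no factor of $w_t$ whatsoever, so it can never be given an integrable-in-time weight $h(t)=\left\Vert \nabla w_t\right\Vert_{L_2}^2$; the best one can hope for from this term is $c\,\Phi$ plus a constant, which gives exponential growth --- exactly what you concede is insufficient. Worse, your Step 2 estimates are either circular or superlinear: controlling $\left\Vert \sigma(w)\right\Vert_{L_3}$ and $\left\Vert w\right\Vert_{L_{10}}$ through $H^2\hookrightarrow C(\overline\Omega)$ presupposes a uniform $H^2$ bound (the conclusion), and if instead you feed $\left\Vert w\right\Vert_{H^2}\le c\sqrt\Phi$ back into these estimates you get terms of order $\Phi^5$ (e.g. $\left\Vert f(w)\right\Vert_{L_2}^2\le c(1+\left\Vert w\right\Vert_{L_\infty}^{10})\le c(1+\Phi^5)$), and a Riccati-type inequality $\Phi'\le h(t)\Phi^5+\dots$ blows up in finite time unless $\int h$ is small --- integrability of the weight alone does not save you.

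What the paper does, and what is missing from your proposal, is a three-ingredient bootstrap. First, instead of $L_\infty$, it uses the interpolation $\left\Vert w\right\Vert_{L_{10}}\le c\left\Vert w\right\Vert_{H^2}^{1/5}\left\Vert w\right\Vert_{H^1}^{4/5}$ (inequality (4.4), through $H^{6/5}$), which together with the $\mathcal{H}$-bound from (2.4) keeps every term at most linear in $\Phi$; Gronwall then gives a bound (4.5) that may grow in time. Second --- this is the key idea you lack --- (4.5) plus the compactness of the embedding $\mathcal{H}_1\subset\mathcal{H}$ plus the asymptotic compactness of Lemma 3.1 imply that $\bigcup_{t\ge0}S(t)B$ is relatively compact in $\mathcal{H}$; exploiting uniform integrability of $|w(t)|^6$ over this relatively compact set, one upgrades (4.4) to the uniform smallness $\left\Vert w(t)\right\Vert_{L_{10}}\le\varepsilon\left\Vert \Delta w(t)\right\Vert_{L_2}^{1/5}+c_\varepsilon$ for every $\varepsilon>0$. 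Third, the paper tests (1.1) also by $-\mu\Delta w$, producing a coercive term $c_7\Phi$ on the left of the differential inequality; the $\varepsilon$-small contributions of $f(w)$ and $\sigma(w)w_t$ are absorbed there, and what remains on the right is precisely $c_8\left\Vert \nabla w_t\right\Vert_{L_2}^2\Phi+c_8(1+\left\Vert \nabla w_t\right\Vert_{L_2}^2)$, which closes uniformly in time by the integrating-factor argument since $\int_0^\infty\left\Vert \nabla w_t\right\Vert_{L_2}^2\,dt<\infty$ by (2.4). Without the compactness bootstrap and the second multiplier, the differential inequality cannot be made simultaneously subcritical in $\Phi$ and dissipative, and (4.1) does not follow.
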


\begin{proof}
We use the formal estimates which can be justified by Galerkin's
approximations. Multiplying both sides of (1.1) by $-\Delta w_{t}$ and
integrating over $\Omega $, we obtain%
\begin{equation*}
\frac{d}{dt}\left( \frac{1}{2}\left\Vert \nabla w_{t}(t)\right\Vert
_{L_{2}(\Omega )}^{2}+\frac{1}{2}\left\Vert \Delta w(t)\right\Vert
_{L_{2}(\Omega )}^{2}+\left\langle g,\Delta w(t)\right\rangle \right) +
\end{equation*}%
\begin{equation*}
+\frac{1}{2}\left\Vert \Delta w_{t}(t)\right\Vert _{L_{2}(\Omega )}^{2}\leq
\left\Vert f(w(t))\right\Vert _{L_{2}(\Omega )}^{2}+
\end{equation*}%
\begin{equation}
+\left\Vert \sigma (w(t))w_{t}(t)\right\Vert _{L_{2}(\Omega )}^{2},\text{ \ }%
\forall t\geq 0.  \tag{4.2}
\end{equation}%
By (2.1) and (2.3), we have
\begin{equation*}
\left\Vert f(w(t))\right\Vert _{L_{2}(\Omega )}^{2}+\left\Vert \sigma
(w(t))w_{t}(t)\right\Vert _{L_{2}(\Omega )}^{2}\leq c_{1}\left( 1+\left\Vert
w(t)\right\Vert _{L_{10}(\Omega )}^{10}+\left\Vert w_{t}(t)\right\Vert
_{L_{2}(\Omega )}^{2}\right) +
\end{equation*}%
\begin{equation}
+c_{2}\left\Vert w(t)\right\Vert _{L_{10}(\Omega )}^{8}\left\Vert
w_{t}(t)\right\Vert _{L_{10}(\Omega )}^{2},\text{ }\forall t\geq 0.
\tag{4.3}
\end{equation}%
On the other hand, by the embedding and interpolation theorems, we find%
\begin{equation}
\left\Vert \varphi \right\Vert _{L_{10}(\Omega )}\leq c_{2}\left\Vert
\varphi \right\Vert _{H^{\frac{6}{5}}(\Omega )}\leq c_{3}\left\Vert \varphi
\right\Vert _{H^{2}(\Omega )}^{\frac{1}{5}}\left\Vert \varphi \right\Vert
_{H^{1}(\Omega )}^{\frac{4}{5}},\text{ \ }\forall \varphi \in H^{2}(\Omega ).
\tag{4.4}
\end{equation}%
Taking into account (2.4), (4.3) and (4.4) in (4.2) and applying Gronwall's
lemma, we obtain%
\begin{equation}
\left\Vert (w(t),w_{t}(t))\right\Vert _{\mathcal{H}_{1}}\leq
C(t,r)(1+\left\Vert (w_{0},w_{1})\right\Vert _{\mathcal{H}_{1}}),\text{ \ }%
\forall t\geq 0,  \tag{4.5}
\end{equation}%
where $C:R_{+}\times R_{+}\rightarrow R_{+}$ is a nondecreasing function
with respect to each variable and $r=\underset{\varphi \in B}{\sup }%
\left\Vert \varphi \right\Vert _{\mathcal{H}}$. Since the embedding $%
\mathcal{H}_{1}\subset \mathcal{H}$ is compact, by (4.5), it follows that
the set $\underset{0\leq t\leq T}{\cup }S(t)B$ is a relatively compact
subset of $\mathcal{H}$, for every $T>0$. This together with Lemma 3.1
implies the relative compactness of $\underset{t\geq 0}{\cup }S(t)B$ in $%
\mathcal{H}$. Now using this fact let us estimate $\left\Vert
w(t)\right\Vert _{L_{10}(\Omega )}$:%
\begin{equation*}
\left\Vert w(t)\right\Vert _{L_{10}(\Omega )}^{10}\leq m^{10}mes(\Omega )+%
\underset{\left\{ x:x\in \Omega ,\text{ }\left\vert w(t,x)\right\vert
>m\right\} }{\int }\left\vert w(t,x)\right\vert ^{10}dx\leq
\end{equation*}%
\begin{equation*}
\leq m^{10}mes(\Omega )+\left( \underset{\left\{ x:x\in \Omega ,\text{ }%
\left\vert w(t,x)\right\vert >m\right\} }{\int }\left\vert w(t,x)\right\vert
^{6}dx\right) ^{\frac{1}{3}}\left\Vert w(t)\right\Vert _{L_{12}(\Omega
)}^{8}\leq
\end{equation*}%
\begin{equation*}
\leq m^{10}mes(\Omega )+c_{4}\left( \underset{\left\{ x:x\in \Omega ,\text{ }%
\left\vert w(t,x)\right\vert >m\right\} }{\int }\left\vert w(t,x)\right\vert
^{6}dx\right) ^{\frac{1}{3}}\left\Vert w(t)\right\Vert _{H^{2}(\Omega
)}^{2}\left\Vert w(t)\right\Vert _{H^{1}(\Omega )}^{6}.
\end{equation*}%
So for any $\varepsilon >0$ there exists $c_{\varepsilon }>0$ such that%
\begin{equation*}
\left\Vert w(t)\right\Vert _{L_{10}(\Omega )}\leq \varepsilon \left\Vert
\Delta w(t)\right\Vert _{L_{2}(\Omega )}^{\frac{1}{5}}+c_{\varepsilon },%
\text{ }\forall t\geq 0,
\end{equation*}%
which together with (4.2)-(4.4) yields%
\begin{equation*}
\frac{d}{dt}\left( \frac{1}{2}\left\Vert \nabla w_{t}(t)\right\Vert
_{L_{2}(\Omega )}^{2}+\frac{1}{2}\left\Vert \Delta w(t)\right\Vert
_{L_{2}(\Omega )}^{2}+\left\langle g,\Delta w(t)\right\rangle \right) +\frac{%
1}{4}\left\Vert \Delta w_{t}(t)\right\Vert _{L_{2}(\Omega )}^{2}\leq
\end{equation*}%
\begin{equation*}
\leq c_{5}\left\Vert \nabla w_{t}(t)\right\Vert _{L_{2}(\Omega
)}^{2}\left\Vert \Delta w(t)\right\Vert _{L_{2}(\Omega )}^{2}+\varepsilon
\left\Vert \Delta w(t)\right\Vert _{L_{2}(\Omega )}^{2}+\widetilde{c}%
_{\varepsilon }+c_{5},\text{ \ }\forall t\geq 0.
\end{equation*}%
Now multiplying both sides of (1.1) by $-\mu \Delta w$ ($\mu \in (0,1)$) and
integrating over $\Omega $, we obtain%
\begin{equation*}
\frac{d}{dt}\left( \frac{1}{2}\mu \left\Vert \Delta w(t)\right\Vert
_{L_{2}(\Omega )}^{2}+\mu \left\langle \nabla w_{t}(t),\nabla
w(t)\right\rangle \right) +\mu \left\Vert \Delta w(t)\right\Vert
_{L_{2}(\Omega )}^{2}\leq
\end{equation*}%
\begin{equation*}
\leq \mu \left\Vert g\right\Vert _{L_{2}(\Omega )}\left\Vert \Delta
w(t)\right\Vert _{L_{2}(\Omega )}+\mu \left\Vert \nabla w_{t}(t)\right\Vert
_{L_{2}(\Omega )}^{2}+\mu \left\Vert \sigma (w(t))w_{t}(t)\right\Vert
_{L_{2}(\Omega )}\left\Vert \Delta w(t)\right\Vert _{L_{2}(\Omega )}
\end{equation*}%
\begin{equation*}
+\mu \left\Vert f(w(t))\right\Vert _{L_{2}(\Omega )}\left\Vert \Delta
w(t)\right\Vert _{L_{2}(\Omega )},\text{ \ \ }\forall t\geq 0.
\end{equation*}%
Taking into account the relative compactness of $\underset{t\geq 0}{\cup }%
S(t)B$, similar to the argument done above, we can say that for any $%
\varepsilon >0$ there exists $\widehat{c}_{\varepsilon }>0$ such that%
\begin{equation*}
\left\Vert f(w(t))\right\Vert _{L_{2}(\Omega )}^{2}+\left\Vert \sigma
(w(t))w_{t}(t)\right\Vert _{L_{2}(\Omega )}^{2}\leq \varepsilon \left(
\left\Vert \Delta w(t)\right\Vert _{L_{2}(\Omega )}^{2}+\left\Vert \Delta
w_{t}(t)\right\Vert _{L_{2}(\Omega )}^{2}\right) +
\end{equation*}%
\begin{equation*}
+\widehat{c}_{\varepsilon }\left\Vert \Delta w(t)\right\Vert _{L_{2}(\Omega
)}^{2}\left\Vert \nabla w_{t}(t)\right\Vert _{L_{2}(\Omega )}^{2}+\widehat{c}%
_{\varepsilon },\text{ }\forall t\geq 0.
\end{equation*}%
By the last three inequalities we have%
\begin{equation*}
\frac{d}{dt}\left( \frac{1}{2}\left\Vert \nabla w_{t}(t)\right\Vert
_{L_{2}(\Omega )}^{2}+\frac{1}{2}(1+\mu )\left\Vert \Delta w(t)\right\Vert
_{L_{2}(\Omega )}^{2}+\mu \left\langle \nabla w_{t}(t),\nabla
w(t)\right\rangle +\left\langle g,\Delta w(t)\right\rangle \right)
\end{equation*}%
\begin{equation*}
+(\frac{1}{4}-\mu c_{6}-\varepsilon )\left\Vert \Delta w_{t}(t)\right\Vert
_{L_{2}(\Omega )}^{2}+(\frac{1}{4}\mu -2\varepsilon )\left\Vert \Delta
w(t)\right\Vert _{L_{2}(\Omega )}^{2}\leq
\end{equation*}%
\begin{equation*}
\leq (c_{5}+\widehat{c}_{\varepsilon })\left\Vert \Delta w(t)\right\Vert
_{L_{2}(\Omega )}^{2}\left\Vert \nabla w_{t}(t)\right\Vert _{L_{2}(\Omega
)}^{2}+c_{6}+\widehat{c}_{\varepsilon }+\widetilde{c}_{\varepsilon },\text{ }%
\forall t\geq 0.
\end{equation*}%
Choosing $\mu $ small enough and $\varepsilon \in (0,\frac{1}{8}\mu ),$ we
obtain%
\begin{equation*}
\frac{d}{dt}\Phi (t)+c_{7}\Phi (t)\leq c_{8}\left\Vert \nabla
w_{t}(t)\right\Vert _{L_{2}(\Omega )}^{2}\Phi (t)+c_{8}(1+\left\Vert \nabla
w_{t}(t)\right\Vert _{L_{2}(\Omega )}^{2}),\text{ }\forall t\geq 0,
\end{equation*}%
where $\Phi (t)=\frac{1}{2}\left\Vert \nabla w_{t}(t)\right\Vert
_{L_{2}(\Omega )}^{2}+\frac{1}{2}(1+\mu )\left\Vert \Delta w(t)\right\Vert
_{L_{2}(\Omega )}^{2}+\mu \left\langle \nabla w_{t}(t),\nabla
w(t)\right\rangle +\newline
+\left\langle g,\Delta w(t)\right\rangle $. Multiplying both sides of the
last inequality by\newline
$e^{\underset{0}{\overset{t}{\int }}(c_{7}-c_{8}\left\Vert \nabla w_{t}(\tau
)\right\Vert _{L_{2}(\Omega )}^{2})d\tau }$ , integrating over $\left[ 0,T%
\right] $ and multiplying both sides of obtained inequality by $e^{-\underset%
{0}{\overset{T}{\int }}\left[ c_{7}-c_{8}\left\Vert \nabla
w_{t}(t)\right\Vert _{L_{2}(\Omega )}^{2}\right] dt}$, we find%
\begin{equation*}
\Phi (T)\leq \Phi (0)e^{-\underset{0}{\overset{T}{\int }}(c_{7}-c_{8}\left%
\Vert \nabla w_{t}(t)\right\Vert _{L_{2}(\Omega )}^{2})dt}+
\end{equation*}%
\begin{equation*}
+c_{8}\underset{0}{\overset{T}{\int }}(1+\left\Vert \nabla
w_{t}(t)\right\Vert _{L_{2}(\Omega )}^{2})e^{-\underset{t}{\overset{T}{\int }%
}(c_{7}-c_{8}\left\Vert \nabla w_{t}(\tau )\right\Vert _{L_{2}(\Omega
)}^{2})d\tau }dt,\text{ }\forall T\geq 0,
\end{equation*}%
which together with (2.4) yields (4.1).
\end{proof}

\begin{lemma}
Let conditions (2.1)-(2.3) hold and $B$ be\ a bounded subset of $\mathcal{H}%
_{1}$. Then every sequence of the form $\left\{ S(t_{n})\varphi _{n}\right\}
_{n=1}^{\infty }$, $\left\{ \varphi _{n}\right\} _{n=1}^{\infty }\subset B$,
$t_{n}\rightarrow \infty $, has a convergent subsequence in $\mathcal{H}_{1}$%
.
\end{lemma}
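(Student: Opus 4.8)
The plan is to upgrade the $\mathcal{H}$-asymptotic compactness of Lemma 3.1 to $\mathcal{H}_{1}$ by running the same weighted energy argument one derivative higher, the decisive new ingredient being that in the $\mathcal{H}_{1}$-bounded regime the solutions converge \emph{uniformly}, which renders the non-differentiable nonlinearities $f$ and $\sigma$ harmless. First I would record the compactness inputs. By Lemma 4.1 the shifted orbits $(w_{k}(t),w_{kt}(t)):=S(t+t_{n_{k}}-T_{0})\varphi_{n_{k}}$ are bounded in $\mathcal{H}_{1}$ uniformly in $t\geq 0$ and $k$, so along a subsequence $w_{k}\rightarrow w$ weak-star in $L_{\infty}(0,\infty;H^{2}\cap H_{0}^{1}(\Omega))$ and $w_{kt}\rightarrow w_{t}$ weakly in $L_{loc}^{2}(0,\infty;H_{0}^{1}(\Omega))$. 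Exactly as in Lemma 3.1, the compact embedding $H^{2}(\Omega)\subset H^{2-\varepsilon}(\Omega)$ together with \cite[Corollary 1]{18} gives $w_{k}\rightarrow w$ strongly in $C([0,T];H^{2-\varepsilon}(\Omega))$. Choosing $\varepsilon<\frac{1}{2}$ so that $2-\varepsilon>\frac{3}{2}$ and invoking $H^{2-\varepsilon}(\Omega)\subset C(\overline{\Omega})$, I obtain the crucial uniform convergence $w_{k}\rightarrow w$ in $C([0,T]\times\overline{\Omega})$; since $f,\sigma\in C(R)$, this yields $f(w_{k})\rightarrow f(w)$ and $\sigma(w_{k})\rightarrow\sigma(w)$ uniformly on $[0,T]\times\overline{\Omega}$.

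Next I would repeat the computation of Lemma 3.1 one order higher. Writing $u=w_{k}-w_{m}$ and testing the difference equation with $-\Delta(u_{t}+\frac{\lambda_{1}}{2}u)$ produces, after integrating over $(s,T)$ and then over $s\in(0,T)$ exactly as in (3.3)--(3.4), a coercive left-hand side controlling $\frac{T}{2}\mathcal{E}(u(T))$ and $\lambda_{1}\int_{0}^{T}s\,\mathcal{E}(u(s))\,ds$, where $\mathcal{E}(u)=\frac{1}{2}(\|\Delta u\|_{L_{2}(\Omega)}^{2}+\|\nabla u_{t}\|_{L_{2}(\Omega)}^{2})$, plus the dissipation $\int_{0}^{T}s\|\Delta u_{t}\|_{L_{2}(\Omega)}^{2}\,ds$. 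The new nonlinear contributions are the source term $\langle f(w_{k})-f(w_{m}),-\Delta(u_{t}+\frac{\lambda_{1}}{2}u)\rangle$ and the damping term $\langle\sigma(w_{k})w_{kt}-\sigma(w_{m})w_{mt},-\Delta(u_{t}+\frac{\lambda_{1}}{2}u)\rangle$. For the damping term I would split $\sigma(w_{k})w_{kt}-\sigma(w_{m})w_{mt}=\sigma(w_{k})u_{t}+(\sigma(w_{k})-\sigma(w_{m}))w_{mt}$; the diagonal piece is estimated by $\|\sigma(w_{k})\|_{L_{\infty}(\Omega)}\|u_{t}\|_{L_{2}(\Omega)}(\|\Delta u_{t}\|_{L_{2}(\Omega)}+\frac{\lambda_{1}}{2}\|\Delta u\|_{L_{2}(\Omega)})\leq\delta(\|\Delta u_{t}\|_{L_{2}(\Omega)}^{2}+\|\Delta u\|_{L_{2}(\Omega)}^{2})+c_{\delta}\|u_{t}\|_{L_{2}(\Omega)}^{2}$ by Young's inequality, absorbing the gradient-dissipation and coercive terms and leaving only the lower-order quantity $\|u_{t}\|_{L_{2}(\Omega)}^{2}$, while the off-diagonal piece is $o(1)$ in $L_{2}(\Omega)$ by the uniform convergence of $\sigma(w_{k})$. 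The source term is handled the same way via $\|f(w_{k})-f(w_{m})\|_{L_{2}(\Omega)}\to 0$.

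The outcome is an estimate of the form $\liminf_{m}\liminf_{k}\mathcal{E}(u(T))\leq\frac{c(1+\log T)}{T}+(\text{lower-order terms in }\mathcal{H})$, where the logarithmic growth is produced exactly as in (3.10)--(3.11) and the residual lower-order terms $\int_{0}^{T}s\|u_{t}\|_{L_{2}(\Omega)}^{2}\,ds$, $\int_{0}^{T}s\|\nabla u\|_{L_{2}(\Omega)}^{2}\,ds$ are controlled by the $\mathcal{H}$-contraction already furnished by Lemma 3.1. Letting $T=T_{0}\to\infty$ then forces $\liminf_{m}\liminf_{k}\|S(t_{n_{k}})\varphi_{n_{k}}-S(t_{n_{m}})\varphi_{n_{m}}\|_{\mathcal{H}_{1}}=0$, and the contradiction argument that closes Lemma 3.1 yields a subsequence converging in $\mathcal{H}_{1}$.

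I expect the main obstacle to be precisely the damping term $\sigma(w)w_{t}$ tested against $-\Delta u_{t}$: with $\sigma$ merely continuous and of critical growth one cannot integrate by parts, since no $\nabla\sigma(w)$ is available, so the whole argument hinges on replacing differentiation of $\sigma$ by the uniform convergence coming from $H^{2}(\Omega)\subset C(\overline{\Omega})$ in three dimensions, together with recycling the already-proved $\mathcal{H}$-contraction to absorb the non-small diagonal part $\sigma(w_{k})u_{t}$. A secondary point requiring care is ensuring that all the formal higher-order multiplications are legitimate, which I would justify (as in Lemma 4.1) by carrying them out on Galerkin approximations and passing to the limit.
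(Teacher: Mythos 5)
Your proposal is, in substance, workable, but it takes a genuinely different route from the paper. The paper does \emph{not} rerun the weighted energy/contraction argument of Lemma 3.1 at the $\mathcal{H}_{1}$ level. Instead it splits the semigroup as $S(t)=U(t)+C(t)$, where $U(t)$ is the linear semigroup of $u_{tt}-\Delta u_{t}-\Delta u=0$ and $C(t)$ is the Duhamel part driven by $\Phi_{(w_{0},w_{1})}(s)=g-f(w(s))-\sigma(w(s))w_{t}(s)$. Two multiplier computations give exponential decay of $U(t)$ in $\mathcal{H}_{1}$ (estimate (4.8)) and the smoothing bound $\left\Vert U(t)\right\Vert _{\mathcal{L}((H^{2}\cap H_{0}^{1})\times L_{2},\mathcal{H}_{1})}\leq M/\sqrt{t}$ (estimate (4.9)); then Lemma 4.1 together with equation (1.1) shows that the forcing set $\{\Phi_{(w_{0},w_{1})}:(w_{0},w_{1})\in B\}$ is precompact in $C([0,t];L_{2}(\Omega ))$, so $C(t):\mathcal{H}_{1}\rightarrow \mathcal{H}_{1}$ is compact, and writing $S(t_{n})\varphi _{n}=U(T)S(t_{n}-T)\varphi _{n}+C(T)S(t_{n}-T)\varphi _{n}$ yields a finite $\varepsilon $-net, hence a convergent subsequence. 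Your approach buys self-containedness (no linear decay estimates, no Duhamel formula) and rests on the same observation that powers the paper's Section 5, namely that $H^{2}(\Omega )\subset C(\overline{\Omega })$ renders the critical, merely continuous nonlinearities subordinate; in fact your version is structurally simpler than Lemma 3.1 itself, since no analogue of the weak-convergence identities (3.5)--(3.8) is needed. The paper's splitting buys brevity and a genuine total-boundedness conclusion in place of the $\liminf$/contradiction device.

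One step of yours needs tightening: the assertion that the residual term $\int_{0}^{T}s\left\Vert u_{t}\right\Vert _{L_{2}(\Omega )}^{2}ds$ is ``controlled by the $\mathcal{H}$-contraction already furnished by Lemma 3.1.'' Lemma 3.1 only gives subsequential convergence of $S(t_{n})\varphi _{n}$ at the single times $t_{n}$, whereas you need $w_{kt}-w_{mt}\rightarrow 0$ in $C([0,T];L_{2}(\Omega ))$ for fixed $T$. This is true but requires an extra argument: either apply Lemma 3.1 to the sequence $\{S(t_{n_{k}}-T_{0})\varphi _{n_{k}}\}$ and propagate the resulting convergence over $[0,T]$ via the uniform-in-time continuous-dependence estimate (2.9) from the proof of Theorem 2.3, or, more directly, apply the Aubin--Lions--Simon theorem to $\{w_{kt}\}$, using that $w_{ktt}$ is bounded in $L_{2}(0,T;L_{2}(\Omega ))$ (which follows from equation (1.1) combined with the bound on $\int_{0}^{T}\left\Vert \Delta w_{kt}\right\Vert _{L_{2}(\Omega )}^{2}dt$ implicit in the proof of Lemma 4.1). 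With either fix, and with the Galerkin justification of the higher-order multipliers that you already flag, your argument closes.
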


\begin{proof}
Let us decompose $\left\{ S(t)\right\} _{t\geq 0}$ as $S(t)=U(t)+C(t)$,
where $U(t)$ is a linear semigroup generated by the problem%
\begin{equation}
\left\{
\begin{array}{c}
u_{tt}-\Delta u_{t}-\Delta u=0,\text{ \ \ in \ \ }(0,\infty )\times \Omega ,
\\
u=0,\text{ \ \ \ \ \ \ \ \ \ \ \ \ \ \ \ \ \ \ \ on \ }(0,\infty )\times
\partial \Omega , \\
u(0,\cdot )=w_{0}\text{ },\text{ \ \ }u_{t}(0,\cdot )=w_{1},\text{\ \ \ \ in
\ }\Omega ,%
\end{array}%
\right.  \tag{4.6}
\end{equation}%
$C(t)$ is a solution operator of
\begin{equation}
\left\{
\begin{array}{c}
v_{tt}-\Delta v_{t}-\Delta v=g(x)-f(w)-\sigma (w)w_{t},\text{ \ in \ }%
(0,\infty )\times \Omega , \\
v=0,\text{ \ \ \ \ \ \ \ \ \ \ \ \ \ \ \ \ \ \ \ \ \ \ \ \ \ \ \ \ \ \ \ \ \
\ \ \ \ \ \ \ \ \ \ on \ }(0,\infty )\times \partial \Omega , \\
v_{k}(0,\cdot )=0\text{ },\text{ \ \ \ \ \ \ }v_{t}(0,\cdot )=0,\text{\ \ \
\ \ \ \ \ \ \ \ \ \ \ \ \ \ \ \ \ \ \ \ \ \ \ \ \ \ in \ }\Omega%
\end{array}%
\right.  \tag{4.7}
\end{equation}%
(i.e. $(u(t),u_{t}(t))=U(t)(w_{0},w_{1})$ and $%
(v(t),v_{t}(t))=C(t)(w_{0},w_{1})$) and $(w(t), w_{t}(t))$= $\
S(t)(w_{0},w_{1})$. Multiplying (4.6)$_{1}$ by $(u_{t}-\frac{1}{2}\Delta
u-\mu \Delta u_{t}-\nu t\Delta u_{t})$ and integrating over $\Omega $, we
obtain%
\begin{equation*}
\frac{d}{dt}\left( E(u(t))+\frac{1}{4}\left\Vert \Delta u(t)\right\Vert
_{L_{2}(\Omega )}^{2}-\frac{1}{2}\left\langle u_{t},\Delta u\right\rangle +%
\frac{1}{2}(\mu +\nu t)\left\Vert \nabla u_{t}(t)\right\Vert _{L_{2}(\Omega
)}^{2}+\right.
\end{equation*}%
\begin{equation*}
\left. +\frac{1}{2}(\mu +\nu t)\left\Vert \Delta u(t)\right\Vert
_{L_{2}(\Omega )}^{2}\right) +\frac{1}{2}(1-\nu )\left\Vert \nabla
u_{t}(t)\right\Vert _{L_{2}(\Omega )}^{2}+\frac{1}{2}(1-\nu )\left\Vert
\Delta u(t)\right\Vert _{L_{2}(\Omega )}^{2}+
\end{equation*}%
\begin{equation*}
+(\mu +\nu t)\left\Vert \Delta u_{t}(t)\right\Vert _{L_{2}(\Omega )}^{2}=0,%
\text{ \ \ \ \ }\forall t\geq 0.
\end{equation*}%
Choosing $(\mu ,\nu )=(1,0)$ and $(\mu ,\nu )=(0,1)$ in the last equality,
we find
\begin{equation}
\left\Vert U(t)\right\Vert _{\mathcal{L}(\mathcal{H}_{1},\mathcal{H}%
_{1})}\leq Me^{-\omega t},\text{ \ \ \ }\forall t\geq 0,  \tag{4.8}
\end{equation}%
and
\begin{equation}
\left\Vert U(t)\right\Vert _{\mathcal{L}((H^{2}(\Omega )\cap
H_{0}^{1}(\Omega ))\times L_{2}(\Omega ),\mathcal{H}_{1})}\leq \frac{M}{%
\sqrt{t}},\text{ \ }\forall t>0,  \tag{4.9}
\end{equation}%
respectively, where $M>0$ and $\omega >0$. \ Also applying Duhamel's
principle to (4.7), we have
\begin{equation}
C(t)(w_{0},w_{1})=\underset{0}{\overset{t}{\int }}U(t-s)(0,\Phi
_{(w_{0},w_{1})}(s))ds,  \tag{4.10}
\end{equation}%
where $\Phi _{(w_{0},w_{1})}(s)=g-f(w(s))-\sigma (w(s))w_{t}(s)$. By Lemma
4.1 and equation (1.1), it follows that the set of functions $\left\{ \Phi
_{(w_{0},w_{1})}(s):(w_{0},w_{1})\in B\text{ \ }\right\} $ is precompact in $%
C([0,t];L_{2}(\Omega ))$. So, from (4.9) and (4.10) we obtain that the
operator $C(t):\mathcal{H}_{1}\rightarrow \mathcal{H}_{1}$, $t\geq 0$, is
compact. Since
\begin{equation*}
S(t_{n})\varphi _{n}=U(T)S(t_{n}-T)\varphi _{n}+C(T)S(t_{n}-T)\varphi _{n}
\end{equation*}%
for $t_{n}\geq T$, by (4.1), (4.8) and the compactness of $C(t)$, we obtain
that the sequence $\left\{ S(t_{n})\varphi _{n}\right\} _{n=1}^{\infty }$
has a finite $\varepsilon $-net in $\mathcal{H}$, for every $\varepsilon >0$%
. This completes the proof.
\end{proof}

Now by Lemma 4.2, similar to Theorem 3.1, we obtain the following theorem:

\begin{theorem}
Under conditions (2.1)-(2.3), the semigroup $\{S(t)\}_{t\geq 0}$ possesses a
global attractor $\mathcal{A}_{\mathcal{H}_{1}}$ in $\mathcal{H}_{1}$.
\end{theorem}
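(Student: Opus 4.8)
The plan is to re-run the argument of Theorem 3.1 with the phase space $\mathcal{H}$ replaced by $\mathcal{H}_1$, invoking the same abstract gradient-semigroup criterion [4, Corollary 2.29]. That criterion delivers a global attractor once one verifies, now in $\mathcal{H}_1$, that (a) $\{S(t)\}_{t\geq 0}$ restricts to a continuous semigroup on $\mathcal{H}_1$, (b) it possesses a strict Lyapunov function, (c) it is asymptotically compact, and (d) its set of equilibria is bounded in $\mathcal{H}_1$. Ingredient (c) is already available as Lemma 4.2, so the work reduces to checking (a), (b) and (d).

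First I would establish the restriction in (a). By Lemma 4.1 the orbit of any $\varphi \in \mathcal{H}_1$ remains in $\mathcal{H}_1$ with norm bounded uniformly on bounded sets, so $S(t)\mathcal{H}_1 \subset \mathcal{H}_1$ and the semigroup law is inherited from $\mathcal{H}$. The delicate point is strong continuity in the $\mathcal{H}_1$-topology, since Theorem 2.3 supplies continuity only in the coarser norm of $\mathcal{H}$. Here I would exploit the splitting $S(t)=U(t)+C(t)$ of Lemma 4.2: along an $\mathcal{H}_1$-convergent sequence $\varphi_n \to \varphi$ the linear part is controlled by $\|U(t)(\varphi_n-\varphi)\|_{\mathcal{H}_1}\leq Me^{-\omega t}\|\varphi_n-\varphi\|_{\mathcal{H}_1}$ from (4.8), while for the Duhamel part one passes $\mathcal{H}$-convergence (Theorem 2.3) through the nonlinearity, using Lemma 2.1 and the $\mathcal{H}_1$-bound of Lemma 4.1 to show $g-f(w_n(s))-\sigma(w_n(s))w_{nt}(s)\to g-f(w(s))-\sigma(w(s))w_t(s)$ in $L_2(\Omega)$, and then concludes $\mathcal{H}_1$-convergence of $C(t)\varphi_n$ via (4.9)--(4.10) and dominated convergence.

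For (b) I would reuse $L(w)=E(w)+\langle F(w),1\rangle-\langle g,w\rangle$, which is continuous on $\mathcal{H}_1\subset\mathcal{H}$; by the energy equality (2.4) it is nonincreasing along trajectories and stationary in $t$ only when $w_t\equiv 0$, hence constant on a complete trajectory only at equilibria, so it is strict. For (d) the equilibria are exactly the pairs $(w,0)$ solving $-\Delta w+f(w)=g$; the growth condition (2.1) and the coercivity (2.2) give an a priori $H_0^1$-bound, and elliptic regularity then places these $w$ in $H^2(\Omega)\cap H_0^1(\Omega)$ with a uniform bound, so the equilibrium set is bounded in $\mathcal{H}_1$. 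With (a)--(d) in hand, [4, Corollary 2.29] produces the global attractor $\mathcal{A}_{\mathcal{H}_1}$.

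I expect the strong $\mathcal{H}_1$-continuity in step (a) to be the only genuinely nonroutine point, precisely because Theorem 2.3 is stated in the weaker topology of $\mathcal{H}$; everything else is a transcription of the $\mathcal{H}$-proof of Theorem 3.1. The linear-plus-compact decomposition of Lemma 4.2 is exactly the tool that bridges this gap, so I anticipate no new estimate beyond what has already been assembled. (If the formulation of [4, Corollary 2.29] one uses asks only for closedness of the semigroup rather than full continuity, step (a) simplifies further, since closedness follows immediately from $\mathcal{H}$-continuity together with the $\mathcal{H}_1$-bound of Lemma 4.1.)
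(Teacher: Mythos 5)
Your proposal is correct and follows essentially the same route as the paper, whose entire proof of this theorem is the observation that Lemma 4.2 (asymptotic compactness in $\mathcal{H}_1$) combined with the strict Lyapunov function $L$ from (2.4) allows one to invoke [4, Corollary 2.29] exactly as in Theorem 3.1. Your additional verifications --- the $\mathcal{H}_1$-continuity of $S(t)$ via the splitting $S(t)=U(t)+C(t)$ with (4.8)--(4.10), and the $\mathcal{H}_1$-boundedness of the equilibrium set via elliptic regularity --- are details the paper leaves implicit in the phrase ``similar to Theorem 3.1,'' and they are carried out correctly.
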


\section{Regularity of the $\mathcal{A}_{\mathcal{H}}$}

To prove the regularity of $\mathcal{A}_{\mathcal{H}}$ we will use the
method used in \cite{9} and \cite{10}. Since $\mathcal{A}_{\mathcal{H}}$ is
invariant, by \cite[p. 159]{1}, for every $(w_{0},w_{1})\in \mathcal{A}_{%
\mathcal{H}}$ there exists an invariant trajectory $\gamma
=\{W(t)=(w(t),w_{t}(t)),$ $t\in R\}\subset $ $\mathcal{A}_{\mathcal{H}}$
such that $W(0)=(w_{0},w_{1})$. By an invariant trajectory we mean a curve $%
\gamma =\{W(t),$ $t\in R\}$ such that $\ S(t)W(\tau )=W(t+\tau )$ for $t\geq
0$ and $\tau \in R$ (see \cite[p. 157]{1}). Let us decompose $w(t)$ as $%
w(t)=u_{k}(t,s)+v_{k}(t,s)$, where%
\begin{equation}
\left\{
\begin{array}{c}
v_{ktt}-\Delta v_{kt}+\sigma _{k}(w)v_{kt}-\Delta v_{k}+f_{k}(w)=g(x),\text{
\ \ in \ \ }(s,\infty )\times \Omega , \\
v_{k}=0,\text{ \ \ \ \ \ \ \ \ \ \ \ \ \ \ \ \ \ \ \ \ \ \ \ \ \ \ \ \ \ \ \
\ \ \ \ on \ }(s,\infty )\times \partial \Omega , \\
v_{k}(s,s,\cdot )=0\text{ },\text{ \ \ \ \ \ \ }v_{kt}(s,s,\cdot )=0,\text{\
\ \ \ \ \ \ \ \ \ \ \ \ \ \ in \ }\Omega%
\end{array}%
\right. ,  \tag{5.1}
\end{equation}

\begin{equation}
\left\{
\begin{array}{c}
u_{ktt}-\Delta u_{kt}+\sigma (w)w_{t}-\sigma _{k}(w)v_{kt}-\Delta u_{k}=%
\text{ \ \ \ \ \ \ \ \ \ \ \ \ \ \ \ } \\
=f_{k}(w)-f(w),\text{ \ \ \ \ \ \ \ \ \ \ \ \ \ \ \ \ \ \ \ \ \ \ \ \ \ \ \
\ \ \ \ \ \ \ \ \ in \ \ }(s,\infty )\times \Omega , \\
u_{k}=0,\text{ \ \ \ \ \ \ \ \ \ \ \ \ \ \ \ \ \ \ \ \ \ \ \ \ \ \ \ \ \ \ \
\ \ \ \ \ \ \ \ \ \ \ \ on \ }(s,\infty )\times \partial \Omega , \\
u_{k}(s,s,\cdot )=w(s,\cdot )\text{ },\text{ \ \ \ \ \ }u_{kt}(s,s,\cdot
)=w_{t}(s,\cdot ),\text{\ \ \ \ \ \ \ \ \ \ in \ }\Omega%
\end{array}%
\right. ,  \tag{5.2}
\end{equation}%
$f_{k}(s)=\left\{
\begin{array}{c}
f(k),\text{\ \ \ \ \ \ \ \ \ }s>k, \\
f(s),\text{ \ \ \ \ \ \ }\left\vert s\right\vert \leq k, \\
f(-k),\text{ \ \ \ \ \ \ }s<-k\text{\ }%
\end{array}%
\right. $, $\ \sigma _{k}(s)=\left\{
\begin{array}{c}
\sigma (k),\text{\ \ \ \ \ \ \ \ }s>k, \\
\sigma (s),\text{ \ \ \ \ \ \ }\left\vert s\right\vert \leq k, \\
\sigma (-k),\text{ \ \ \ \ \ \ }s<-k\text{\ }%
\end{array}%
\right. $ and $k\in
\mathbb{N}
$.\newline
Now let us prove the following lemmas:

\begin{lemma}
\textit{Assume that conditions (2.1)-(2.3) are satisfied}. \textit{Then }%
\newline
$(v_{k}(t,s),v_{kt}(t,s))\in \mathcal{H}_{1}$ and for any $k\in
\mathbb{N}
$ there exists $T_{k}<0$ such that
\begin{equation}
\left\Vert v_{kt}(t,s)\right\Vert _{H^{1}(\Omega )}+\left\Vert
v_{k}(t,s)\right\Vert _{H^{2}(\Omega )}\leq r_{0}k^{\frac{128}{65}},\text{ }%
\forall s\leq t\leq T_{k},  \tag{5.3}
\end{equation}%
where the positive constant $r_{0}$ is independent of $k$ and $(w_{0},w_{1})$%
.
\end{lemma}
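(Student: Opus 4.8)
The plan is to read (5.1) as a strongly damped \emph{linear} wave equation for $v_k$ in which $\sigma_k(w)\geq 0$ is a bounded external coefficient and $g-f_k(w)$ is an external forcing, and then to run the $\mathcal{H}_1$-energy scheme of Lemma 4.1 while tracking the dependence on the truncation level $k$. For each fixed $k$ one has $|\sigma_k(w)|\leq\sigma(k)$ and $|f_k(w)|\leq|f(\pm k)|$, so both belong to $L_\infty((s,\infty)\times\Omega)$; since the zero initial data lie in $\mathcal{H}_1$ and the strong damping $-\Delta v_{kt}$ is smoothing, Galerkin approximations give $(v_k(t,s),v_{kt}(t,s))\in\mathcal{H}_1$. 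The bound (5.3) is the real content, and I would first record a bound in $\mathcal{H}$ uniform in $k$ and $s$: testing (5.1) by $v_{kt}$, the damping term $\langle\sigma_k(w)v_{kt},v_{kt}\rangle\geq 0$ is discarded and the forcing work is $\langle g-f_k(w),v_{kt}\rangle\leq\|g-f_k(w)\|_{L_{6/5}(\Omega)}\|v_{kt}\|_{L_6(\Omega)}$. Splitting $\Omega$ into $\{|w|\leq k\}$ and $\{|w|>k\}$ and using that $w(\tau)$ remains in a fixed bounded subset of $H_0^1(\Omega)\subset L_6(\Omega)$ on $\mathcal{A}_{\mathcal{H}}$, one checks that $\|f_k(w)\|_{L_{6/5}(\Omega)}$ is bounded independently of $k$, so the equation is dissipative and $E(v_k(t))$ together with the unit-window dissipation $\int_a^{a+1}\|\nabla v_{kt}\|_{L_2(\Omega)}^2\,d\tau$ are bounded uniformly in $k$, $s$ and $a$.

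For the $\mathcal{H}_1$-estimate I would test (5.1) by $-\Delta v_{kt}$ and add a small multiple of $-\mu\Delta v_k$, exactly as in Lemma 4.1, producing a functional $\Phi_k(t)$ equivalent to $\|\nabla v_{kt}\|_{L_2(\Omega)}^2+\|\Delta v_k\|_{L_2(\Omega)}^2$ and a differential inequality of the form $\frac{d}{dt}\Phi_k+c\Phi_k+\tfrac12\|\Delta v_{kt}\|_{L_2(\Omega)}^2\leq(\text{forcing})$. The forcing is $\langle g-f_k(w),-\Delta v_{kt}\rangle$ together with $\langle\sigma_k(w)v_{kt},\Delta v_{kt}\rangle$. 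The factor $f_k(w)$ I would estimate by the same $\{|w|\leq k\}/\{|w|>k\}$ dichotomy, now in $L_2(\Omega)$, combining the interpolation inequality (4.4) with the uniform $L_6(\Omega)$-bound of $w$; after Young's inequality this is partly absorbed into the dissipation and partly left as an additive forcing $c\|f_k(w)\|_{L_2(\Omega)}^2$, a power of $k$.

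The main obstacle is the term $\langle\sigma_k(w)v_{kt},\Delta v_{kt}\rangle$. Because $\sigma$ is only continuous (condition (2.3) imposes no differentiability), I cannot integrate by parts to move the derivative onto $\sigma_k(w)$, so I must bound it directly by Hölder and Sobolev as $\|\sigma_k(w)\|_{L_a(\Omega)}\|v_{kt}\|_{L_b(\Omega)}\|\Delta v_{kt}\|_{L_2(\Omega)}$ with $\tfrac1a+\tfrac1b=\tfrac12$, interpolating $\|v_{kt}\|_{L_b(\Omega)}$ between $\|\nabla v_{kt}\|_{L_2(\Omega)}$ and $\|\Delta v_{kt}\|_{L_2(\Omega)}$ via (4.4) and using Young's inequality to absorb the top-order factor into the dissipation $\tfrac12\|\Delta v_{kt}\|_{L_2(\Omega)}^2$; what survives is a power of $k$ times $\|\nabla v_{kt}\|_{L_2(\Omega)}^2$, an additive term controlled by the unit-window bound. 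Bounding $\|\sigma_k(w)\|_{L_a(\Omega)}$ and $\|f_k(w)\|_{L_2(\Omega)}$ sharply requires the quantitative control of the super-level sets $\{|w|>k\}$ coming from the uniform bounds on the trajectory; optimizing all these interpolation and Young exponents against the available $H^2$-smoothing is exactly what fixes the admissible power of $k$ and yields the exponent $\tfrac{128}{65}$ in (5.3). This weighing of the critical growth of $\sigma$ against the $H^2$-regularity is the technical heart of the lemma.

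Finally I would integrate the differential inequality from $s$ with $\Phi_k(s)=0$. Since the Gronwall weight $e^{-c(t-\tau)}$ localizes each additive forcing term near $\tau=t$, the uniform unit-window bounds from the first paragraph make the resulting weighted integrals bounded independently of $s$, yielding $\Phi_k(t)\leq r_0^2\,k^{256/65}$. The point to watch is to keep every estimate \emph{additive} in $\Phi_k$: a multiplicative term of the type $\|\nabla v_{kt}\|_{L_2(\Omega)}^2\,\Phi_k$ would, after Gronwall, create a factor $\exp(c\int_s^t\|\nabla v_{kt}\|_{L_2(\Omega)}^2\,d\tau)$ that explodes as $s\to-\infty$, because the persistent forcing of the trajectory makes $\int_s^t\|\nabla v_{kt}\|_{L_2(\Omega)}^2\,d\tau$ grow like $t-s$. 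Choosing $T_k<0$ sufficiently negative so that this dissipative regime holds throughout $s\leq t\leq T_k$ (a choice that will also serve the companion estimate on $u_k$ and the limit $s\to-\infty$), and observing that $r_0$ depends only on $\|g\|_{L_2(\Omega)}$ and the universal $\mathcal{H}$-bound of $\mathcal{A}_{\mathcal{H}}$, gives (5.3) with $r_0$ independent of $k$ and of $(w_0,w_1)$.
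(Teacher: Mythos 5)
Your proposal reproduces the first half of the paper's argument (uniform $\mathcal{H}$-bound for $v_k$, the multiplier $-\Delta v_{kt}-\mu\Delta v_k$, H\"older/interpolation/Young on the $\sigma_k$ and $f_k$ terms, weighted Gronwall), but the technical heart of the lemma --- how one actually reaches the exponent $\frac{128}{65}$ --- is missing, and the step where you claim it is obtained would fail. With only the static information you invoke (the trajectory stays in a bounded set of $H_0^1(\Omega)\subset L_6(\Omega)$), the sharp bounds available under (2.1), (2.3) are $\Vert \sigma_k(w)\Vert_{L_{5/2}(\Omega)}^{5/2}\leq ck^4$ and $\Vert f_k(w)\Vert_{L_2(\Omega)}^2\leq ck^4$, since $|\sigma_k(w)|^{5/2}+|f_k(w)|^2\leq c\bigl(1+\min(|w|,k)^{10}\bigr)\leq c\bigl(1+k^4|w|^6\bigr)$; no amount of re-optimizing H\"older, interpolation and Young exponents improves on this, so your scheme delivers $\Phi_k\lesssim k^4$, i.e.\ the bound $k^2$ in (5.3), not $k^{128/65}$. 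The distinction is not cosmetic: in Lemma 5.2 the estimate (5.3) is interpolated against the uniform $H^1$-bound (5.4) via $\Vert\varphi\Vert_{H^{3/2+\varepsilon}(\Omega)}\leq c\Vert\varphi\Vert_{H^1(\Omega)}^{1/2-\varepsilon}\Vert\varphi\Vert_{H^2(\Omega)}^{1/2+\varepsilon}$ to conclude $\Vert v_k\Vert_{C(\overline{\Omega})}\leq\frac12 k$, which requires the $H^2$-exponent to be \emph{strictly less than} $2$; the exponent $2$ breaks the whole regularity argument. The missing idea is dynamic, not interpolatory: the paper tests equation (1.1) itself by the truncated function $w^{(m)}$, obtaining a differential inequality ((5.15)) that controls the super-level integrals $\int_{\{|w|>2m\}}|w|^6dx\leq c\Vert\nabla w^{(m)}\Vert_{L_2(\Omega)}^2$ by $\Vert\nabla w_t\Vert_{L_2(\Omega)}^2+m^{-4}$, couples this to the $H^2$-energy functional with weight $k^{32/5}$, balances $m^{32/5}$ against $k^{32/5}m^{-4}$ by the choice $m=k^{8/13}$ (this is precisely where $k^{256/65}$, hence $k^{128/65}$, comes from), and then uses two facts special to a complete trajectory on the attractor --- $\int_{-\infty}^{\infty}\Vert\nabla w_t\Vert_{L_2(\Omega)}^2dt<\infty$ (from the energy identity (2.4)) and $w_t(t)\to 0$ in $L_2(\Omega)$ as $t\to-\infty$ (Lyapunov structure) --- to choose $T_k$ so that the remaining $k$-weighted terms are at most $1$ for $t\leq T_k$. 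This also explains why the lemma asserts the bound only on $(-\infty,T_k]$ with $T_k$ depending on $k$, a feature your proposal does not account for.

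A secondary, structural point: your reason for insisting on purely additive estimates --- that a multiplicative term $\Vert\nabla v_{kt}\Vert_{L_2(\Omega)}^2\Phi_k$ is fatal because $\int_s^t\Vert\nabla v_{kt}\Vert_{L_2(\Omega)}^2d\tau$ ``grows like $t-s$'' --- misreads the situation. Since $w$ is an invariant trajectory in $\mathcal{A}_{\mathcal{H}}$, the finiteness of the total dissipation integral above implies (paper's (5.7)) that $\int_{\tau_1}^{\tau_2}\Vert\nabla v_{kt}\Vert_{L_2(\Omega)}^2dt\leq c\bigl(1+(\tau_2-\tau_1)^{1/2}\bigr)$, i.e.\ only square-root growth; the paper therefore keeps the multiplicative term and absorbs $\exp\bigl(\widehat{c}_2\int\Vert\nabla v_{kt}\Vert^2_{L_2(\Omega)}\bigr)$ into the decaying factor $e^{-\widehat{c}_1(T-s)}$. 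Your additive treatment of the $\sigma_k$-term is legitimate as far as it goes, but missing this sub-linear growth is symptomatic of the larger omission: the proof cannot be run with bounds that hold for an arbitrary bounded solution; it must exploit that $(w,w_t)$ lives on a complete, backward-decaying orbit of the attractor.
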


\begin{proof}
Multiplying both sides of (5.1)$_{1}$ by $v_{kt}+\mu v_{k}$ ($\mu \in (0,1)$%
) and integrating over $\Omega $, we obtain%
\begin{equation*}
\frac{d}{dt}\left( E(v_{k}(t,s))+\frac{\mu }{2}\left\Vert \nabla
v_{k}(t,s)\right\Vert _{L_{2}(\Omega )}^{2}+\mu \left\langle
v_{kt}(t,s),v_{k}(t,s)\right\rangle \right) +
\end{equation*}%
\begin{equation*}
+\frac{1}{2}\left\Vert \nabla v_{kt}(t,s)\right\Vert _{L_{2}(\Omega
)}^{2}-\mu \left\Vert v_{kt}(t,s)\right\Vert _{L_{2}(\Omega )}^{2}+(\mu
-c_{1}\mu ^{2})\left\Vert \nabla v_{k}(t,s)\right\Vert _{L_{2}(\Omega
)}^{2}\leq c_{2},\text{ }\forall t\geq s.
\end{equation*}%
Choosing $\mu $ small enough in the last inequality, we find%
\begin{equation}
\left\Vert v_{kt}(t,s)\right\Vert _{L_{2}(\Omega )}+\left\Vert
v_{k}(t,s)\right\Vert _{H_{0}^{1}(\Omega )}\leq c_{3},\text{ \ \ }\forall
t\geq s.  \tag{5.4}
\end{equation}%
Multiplying both sides of (5.1)$_{1}$ by $v_{kt}$, integrating over $(\tau
_{1},\tau _{2})\times \Omega $ and taking into account (5.4), we have%
\begin{equation*}
\underset{\tau _{1}}{\overset{\tau _{2}}{\int }}\left\Vert \nabla
v_{kt}(t,s)\right\Vert _{L_{2}(\Omega )}^{2}dt\leq c_{4}+\underset{\tau _{1}}%
{\overset{\tau _{2}}{\int }}\left\vert \left\langle f_{k}^{\prime
}(w(t))w_{t}(t),v_{k}(t,s)\right\rangle \right\vert dt\leq c_{4}+
\end{equation*}%
\begin{equation}
+c_{5}\underset{\tau _{1}}{\overset{\tau _{2}}{\int }}\left\Vert \nabla
w_{t}(t)\right\Vert _{L_{2}(\Omega )}dt,\text{ \ \ }\forall \tau _{2}\geq
\tau _{1}\geq s.  \tag{5.5}
\end{equation}%
On the other hand, by (2.4), we have%
\begin{equation}
\underset{-\infty }{\overset{\infty }{\int }}\left\Vert \nabla
w_{t}(t)\right\Vert _{L_{2}(\Omega )}^{2}dt<\infty ,  \tag{5.6}
\end{equation}%
which together with (5.5) yields%
\begin{equation}
\underset{\tau _{1}}{\overset{\tau _{2}}{\int }}\left\Vert \nabla
v_{kt}(t,s)\right\Vert _{L_{2}(\Omega )}^{2}dt\leq c_{6}(1+\left( \tau
_{2}-\tau _{1}\right) ^{\frac{1}{2}}),\text{ \ }\forall \tau _{2}\geq \tau
_{1}\geq s.  \tag{5.7}
\end{equation}%
Multiplying both sides of (5.1)$_{1}$ by $-\Delta v_{kt}-\mu \Delta v_{k}$ ($%
\mu \in (0,1)$), integrating over $\Omega $ and taking into account (5.4),
we have
\begin{equation*}
\frac{d}{dt}\left( \frac{1}{2}\left\Vert \nabla v_{kt}(t,s)\right\Vert
_{L_{2}(\Omega )}^{2}+\frac{1}{2}\left\Vert \Delta v_{k}(t,s)\right\Vert
_{L_{2}(\Omega )}^{2}+\mu \left\langle \nabla v_{kt}(t,s),\nabla
v_{k}(t,s)\right\rangle \right) +
\end{equation*}%
\begin{equation*}
+(\frac{1}{2}-c_{7}\mu )\left\Vert \Delta v_{kt}(t,s)\right\Vert
_{L_{2}(\Omega )}^{2}+(\mu -\mu ^{2})\left\Vert \Delta v_{k}(t,s)\right\Vert
_{L_{2}(\Omega )}^{2}\leq c_{7}+
\end{equation*}%
\begin{equation}
+c_{7}\left\Vert \sigma _{k}(w(t))v_{kt}(t,s)\right\Vert _{L_{2}(\Omega
)}^{2}+c_{7}\left\Vert f_{k}(w(t))\right\Vert _{L_{2}(\Omega )}^{2},\text{ }%
\forall t\geq s.  \tag{5.8}
\end{equation}%
Now let us estimate the last two terms on the right side of (5.8). By \
(4.4) and (5.4), we find%
\begin{equation*}
\left\Vert \sigma _{k}(w(t))v_{kt}(t,s)\right\Vert _{L_{2}(\Omega )}^{2}\leq
\left\Vert \sigma _{k}(w(t))\right\Vert _{L_{\frac{5}{2}}(\Omega
)}^{2}\left\Vert v_{kt}(t,s)\right\Vert _{L_{10}(\Omega )}^{2}\leq
\end{equation*}%
\begin{equation*}
\leq c_{8}\left\Vert \sigma _{k}(w(t))\right\Vert _{L_{\frac{5}{2}}(\Omega
)}^{2}\left\Vert v_{kt}(t,s)\right\Vert _{H^{2}(\Omega )}^{\frac{2}{5}%
}\left\Vert v_{kt}(t,s)\right\Vert _{H^{1}(\Omega )}^{\frac{8}{5}}\leq
\end{equation*}%
\begin{equation*}
\leq c_{9}\left\Vert \sigma _{k}(w(t))\right\Vert _{L_{\frac{5}{2}}(\Omega
)}^{4}+c_{9}\left\Vert \Delta v_{kt}(t,s)\right\Vert _{L_{2}(\Omega
)}^{2}\left\Vert \nabla v_{kt}(t,s)\right\Vert _{L_{2}(\Omega )}^{2}+
\end{equation*}%
\begin{equation}
+\frac{1}{3c_{7}}\left\Vert \Delta v_{kt}(t,s)\right\Vert _{L_{2}(\Omega
)}^{2},\text{ }\forall t\geq s.  \tag{5.9}
\end{equation}%
Also by the definitions of $\sigma _{k}(\cdot )$ and $f_{k}(\cdot )$, we have%
\begin{equation*}
\left\Vert \sigma _{k}(w(t))\right\Vert _{L_{\frac{5}{2}}(\Omega )}^{\frac{5%
}{2}}=\underset{\Omega }{\int }\left\vert \sigma _{k}(w(t,x))\right\vert ^{%
\frac{5}{2}}dx\leq
\end{equation*}%
\begin{equation*}
\leq \underset{\left\{ x:x\in \Omega ,\text{ }\left\vert w(t,x)\right\vert
\leq 2m\right\} }{\int }\left\vert \sigma _{k}(w(t,x))\right\vert ^{\frac{5}{%
2}}dx+\underset{\left\{ x:x\in \Omega ,\text{ }\left\vert w(t,x)\right\vert
>2m\right\} }{\int }\left\vert \sigma _{k}(w(t,x))\right\vert ^{\frac{5}{2}%
}dx\leq
\end{equation*}%
\begin{equation*}
\leq c_{10}m^{4}\underset{\left\{ x:x\in \Omega ,\text{ }\left\vert
w(t,x)\right\vert \leq 2m\right\} }{\int }( 1+\left\vert w(t,x)\right\vert
^{6} )dx+
\end{equation*}%
\begin{equation*}
+c_{10}k^{4}\underset{\left\{ x:x\in \Omega ,\text{ }\left\vert
w(t,x)\right\vert >2m\right\} }{\int }\left\vert w(t,x)\right\vert
^{6}dx\leq c_{11}m^{4}+
\end{equation*}%
\begin{equation}
+c_{10}k^{4}\underset{\left\{ x:x\in \Omega ,\text{ }\left\vert
w(t,x)\right\vert >2m\right\} }{\int }\left\vert w(t,x)\right\vert ^{6}dx,%
\text{ \ \ }\forall k\in
\mathbb{N}
\ \forall m\geq 1 \text{ and}\ \forall t\in R.  \tag{5.10}
\end{equation}%
\begin{equation*}
\left\Vert f_{k}(w(t))\right\Vert _{L_{2}(\Omega )}^{2}=\underset{\Omega }{%
\int }\left\vert f_{k}(w(t,x))\right\vert ^{2}dx\leq
\end{equation*}%
\begin{equation*}
\leq c_{12}m^{4}\underset{\left\{ x:x\in \Omega ,\text{ }\left\vert
w(t,x)\right\vert \leq 2m\right\} }{\int }(1+\left\vert w(t,x)\right\vert
^{6})dx+
\end{equation*}%
\begin{equation*}
+c_{12}k^{4}\underset{\left\{ x:x\in \Omega ,\text{ }\left\vert
w(t,x)\right\vert >2m\right\} }{\int }\left\vert w(t,x)\right\vert
^{6}dx\leq c_{13}m^{4}+
\end{equation*}%
\begin{equation}
c_{12}k^{4}\underset{\left\{ x:x\in \Omega ,\text{ }\left\vert
w(t,x)\right\vert >2m\right\} }{\int }\left\vert w(t,x)\right\vert ^{6}dx,%
\text{ \ \ }\forall k\in
\mathbb{N}
\ \forall m\geq 1\text{ and}\ \forall t\in R.  \tag{5.11}
\end{equation}%
Now denote $w^{(m)}(t,x)=\left\{
\begin{array}{c}
w(t,x)-m,\text{ \ \ \ \ \ }w(t,x)>m \\
0,\text{ \ \ \ \ \ \ \ \ \ \ \ \ \ \ \ \ }\left\vert w(t,x)\right\vert \leq m
\\
w(t,x)+m,\text{ \ \ \ \ \ }w(t,x)<-m%
\end{array}%
\right. $. Since,%
\begin{equation*}
\left\vert w(t,x)\right\vert <2\left\vert w^{(m)}(t,x)\right\vert ,\text{ \
\ \ \ }\forall (t,x)\in \left\{ (t,x)\in R\times \Omega ,\text{ }\left\vert
w(t,x)\right\vert >2m\right\} ,
\end{equation*}

we have
\begin{equation*}
\underset{\left\{ x:x\in \Omega ,\text{ }\left\vert w(t,x)\right\vert
>2m\right\} }{\int }\left\vert w(t,x)\right\vert ^{6}dx\leq 2^{6}\underset{%
\left\{ x:x\in \Omega ,\text{ }\left\vert w(t,x)\right\vert >2m\right\} }{%
\int }\left\vert w^{m}(t,x)\right\vert ^{6}dx\leq
\end{equation*}%
\begin{equation}
\leq 2^{6}\underset{\Omega }{\int }\left\vert w^{m}(t,x)\right\vert
^{6}dx\leq c_{14}\left\Vert \nabla w^{(m)}(t)\right\Vert _{L_{2}(\Omega
)}^{2},\text{ \ \ }\forall t\in R.\text{\ }  \tag{5.12}
\end{equation}%
So, by (5.8)-(5.12), it follows that%
\begin{equation*}
\frac{d}{dt}\left( \frac{1}{2}\left\Vert \nabla v_{kt}(t,s)\right\Vert
_{L_{2}(\Omega )}^{2}+\frac{1}{2}\left\Vert \Delta v_{k}(t,s)\right\Vert
_{L_{2}(\Omega )}^{2}+\mu \left\langle \nabla v_{kt}(t,s),\nabla
v_{k}(t,s)\right\rangle \right) +
\end{equation*}%
\begin{equation*}
+(\frac{1}{6}-c_{7}\mu )\left\Vert \Delta v_{kt}(t,s)\right\Vert
_{L_{2}(\Omega )}^{2}+(\mu -\mu ^{2})\left\Vert \Delta v_{k}(t,s)\right\Vert
_{L_{2}(\Omega )}^{2}\leq c_{15}m^{\frac{32}{5}}+
\end{equation*}%
\begin{equation*}
+c_{15}\left\Vert \Delta v_{kt}(t,s)\right\Vert _{L_{2}(\Omega
)}^{2}\left\Vert \nabla v_{kt}(t,s)\right\Vert _{L_{2}(\Omega )}^{2}+
\end{equation*}%
\begin{equation}
+c_{15}k^{\frac{32}{5}}\left\Vert \nabla w^{(m)}(t)\right\Vert
_{L_{2}(\Omega )}^{2},\text{ \ }\forall k\in
\mathbb{N}
\ \forall m\geq 1\ \text{and}\ \forall t\geq s.  \tag{5.13}
\end{equation}%
On the other hand, testing (1.1) by $w^{(m)}$, we obtain%
\begin{equation*}
\frac{d}{dt}\left\langle w_{t}(t),w^{(m)}(t)\right\rangle +\left\Vert \nabla
w^{(m)}(t)\right\Vert _{L_{2}(\Omega )}^{2}-\left\Vert
w_{t}^{(m)}(t)\right\Vert _{L_{2}(\Omega )}^{2}+\left\langle \nabla
w_{t}(t),\nabla w^{(m)}(t)\right\rangle =
\end{equation*}%
\begin{equation}
=\left\langle g,w^{(m)}(t)\right\rangle -\left\langle \sigma
(w(t))w_{t}(t),w^{(m)}(t)\right\rangle -\left\langle
f(w(t)),w^{(m)}(t)\right\rangle ,\text{ }\forall t\in R.  \tag{5.14}
\end{equation}%
Let us estimate each term on the right hand side of (5.14). By the
definition of $w^{(m)}$, we have
\begin{equation*}
\left\langle g,w^{(m)}(t)\right\rangle \leq \left( \underset{\left\{ x:x\in
\Omega ,\text{ }\left\vert w(t,x)\right\vert >m\right\} }{\int }\left\vert
g(x)\right\vert ^{\frac{6}{5}}dx\right) ^{\frac{5}{6}}\left\Vert
w^{(m)}(t)\right\Vert _{L_{6}(\Omega )}\leq
\end{equation*}%
\begin{equation*}
\leq \frac{c_{16}}{m^{2}}\left\Vert \nabla w^{(m)}(t)\right\Vert
_{L_{2}(\Omega )},\text{ \ \ }\forall t\in R.
\end{equation*}%
By (2.3), it follows that
\begin{equation*}
\left\vert \left\langle \sigma (w(t))w_{t}(t),w^{(m)}(t)\right\rangle
\right\vert \leq c_{17}\left\Vert \nabla w_{t}(t)\right\Vert _{L_{2}(\Omega
)}\left\Vert \nabla w^{(m)}(t)\right\Vert _{L_{2}(\Omega )},\text{ \ }%
\forall t\in R.
\end{equation*}%
Also by (2.3), we obtain
\begin{equation*}
\left\langle f(w(t)),w^{(m)}(t)\right\rangle >-\lambda _{1}\left\langle
w(t),w^{(m)}(t)\right\rangle \geq
\end{equation*}%
\begin{equation*}
\geq -\lambda _{1}\left( \underset{\left\{ x:x\in \Omega ,\text{ }\left\vert
w(t,x)\right\vert >m\right\} }{\int }\left\vert w(t,x)\right\vert ^{\frac{6}{%
5}}dx\right) ^{\frac{5}{6}}\left\Vert w^{(m)}(t)\right\Vert _{L_{6}(\Omega
)}\geq
\end{equation*}%
\begin{equation*}
\geq -\frac{c_{18}}{m^{4}}\left\Vert \nabla w^{(m)}(t)\right\Vert
_{L_{2}(\Omega )},\text{ \ }\forall t\in R,
\end{equation*}%
for large enough $m$. Taking into account the last three inequalities in
(5.14), we have
\begin{equation*}
\frac{d}{dt}\left\langle w_{t}(t),w^{(m)}(t)\right\rangle +c_{19}\left\Vert
\nabla w^{(m)}(t)\right\Vert _{L_{2}(\Omega )}^{2}\leq
\end{equation*}%
\begin{equation}
\leq c_{20}\left\Vert \nabla w_{t}(t)\right\Vert _{L_{2}(\Omega )}^{2}+\frac{%
c_{20}}{m^{4}},\text{ \ }\forall t\in R.  \tag{5.15}
\end{equation}%
for large enough $m$. Now multiplying (5.15) by $\frac{c_{15}}{c_{19}}k^{%
\frac{32}{5}}$, adding to (5.13) and then choosing $m=k^{\frac{8}{13}}$, we
get%
\begin{equation*}
\frac{d}{dt}\Lambda _{k,s}(t)+\widehat{c}_{1}\Lambda _{k,s}(t)\leq \widehat{c%
}_{2}\Lambda _{k,s}(t)\left\Vert \nabla v_{kt}(t,s)\right\Vert
_{L_{2}(\Omega )}^{2}+
\end{equation*}%
\begin{equation*}
+\widehat{c}_{2}k^{\frac{256}{65}}+\widehat{c}_{2}k^{\frac{32}{5}}\left\Vert
\nabla w_{t}(t)\right\Vert _{L_{2}(\Omega )}^{2}+\widehat{c}_{2}k^{\frac{64}{%
5}}\left\vert \left\langle w_{t}(t),w^{(k^{\frac{13}{8}})}(t)\right\rangle
\right\vert ^{2},\text{ \ }\forall t\geq s,
\end{equation*}%
for large enough $k$ and small enough $\mu $, where $\widehat{c}_{1}$ and $%
\widehat{c}_{2}$ are positive constants and $\Lambda _{k,s}(t):=\frac{1}{2}%
\left\Vert \nabla v_{kt}(t,s)\right\Vert _{L_{2}(\Omega )}^{2}+\frac{1}{2}%
\left\Vert \Delta v_{k}(t,s)\right\Vert _{L_{2}(\Omega )}^{2}+\mu
\left\langle \nabla v_{kt}(t,s),\nabla v_{k}(t,s)\right\rangle +\frac{c_{15}%
}{c_{16}}k^{\frac{32}{5}}\left\langle w_{t}(t),w^{(k^{\frac{8}{13}%
})}(t)\right\rangle $. Since
\begin{equation*}
\left\vert \left\langle w_{t}(t),w^{(k^{\frac{8}{13}})}(t)\right\rangle
\right\vert \leq \left\Vert w_{t}(t)\right\Vert _{L_{6}(\Omega )}\left(
\underset{\left\{ x:x\in \Omega ,\text{ }\left\vert w(t,x)\right\vert >k^{%
\frac{8}{13}}\right\} }{\int }\left\vert w(t,x)\right\vert ^{\frac{6}{5}%
}dx\right) ^{\frac{5}{6}}\leq
\end{equation*}%
\begin{equation*}
\leq \frac{\widehat{c}_{3}}{k^{\frac{32}{13}}}\left\Vert \nabla
w_{t}(t)\right\Vert _{L_{2}(\Omega )},\text{ \ \ }\forall t\in R,
\end{equation*}%
by the last differential inequality, we obtain%
\begin{equation*}
\frac{d}{dt}\Lambda _{k,s}(t)+\widehat{c}_{1}\Lambda _{k,s}(t)\leq \widehat{c%
}_{2}\Lambda _{k,s}(t)\left\Vert \nabla v_{kt}(t,s)\right\Vert
_{L_{2}(\Omega )}^{2}+
\end{equation*}
\begin{equation*}
+\widehat{c}_{4}k^{\frac{256}{65}}+\widehat{c}_{4}k^{8}\left\Vert \nabla
w_{t}(t)\right\Vert _{L_{2}(\Omega )}^{2},\text{ \ }\forall t\geq s.
\end{equation*}%
Multiplying both sides of the above inequality by $e^{\underset{s}{\overset{t%
}{\int }}\left[ \widehat{c}_{1}-\widehat{c}_{2}\left\Vert \nabla v_{kt}(\tau
,s)\right\Vert _{L_{2}(\Omega )}^{2}\right] d\tau }$ , integrating over $%
\left[ s,T\right] $, multiplying both sides of the obtained inequality by%
\newline
$e^{-\underset{s}{\overset{T}{\int }}\left[ \widehat{c}_{1}-\widehat{c}%
_{2}\left\Vert \nabla v_{kt}(t,s)\right\Vert _{L_{2}(\Omega )}^{2}\right]
dt} $ and taking into account (5.7), we find
\begin{equation*}
\Lambda _{k,s}(T)\leq \widehat{c}_{5}k^{\frac{32}{5}}\left\vert \left\langle
w_{t}(s),w^{(m)}(s)\right\rangle \right\vert +\widehat{c}_{5}k^{\frac{256}{65%
}}+
\end{equation*}%
\begin{equation}
+\widehat{c}_{5}k^{8}\underset{s}{\overset{T}{\int }}\left\Vert \nabla
w_{t}(t)\right\Vert _{L_{2}(\Omega )}^{2}dt,\text{ \ }\forall T\geq s,
\tag{5.16}
\end{equation}%
for large enough $k$ and small enough $\mu $. On the other hand, since $%
\mathcal{A}_{\mathcal{H}}$ is compact subset of $\mathcal{H}$ and problem
(1.1)-(1.3) admits a strict Lyapunov function, we have%
\begin{equation}
w_{t}(t)\rightarrow 0\text{ strongly in }L_{2}(\Omega )\text{ as }%
t\rightarrow -\infty  \tag{5.17}
\end{equation}%
Thus, by (5.6) and (5.17), for any $k\in
\mathbb{N}
$ there exists $T_{k}=T_{k}(\gamma )<0$ such that%
\begin{equation*}
\widehat{c}_{5}k^{\frac{32}{5}}\left\vert \left\langle
w_{t}(T),w^{(m)}(T)\right\rangle \right\vert +\widehat{c}_{5}k^{8}\underset{%
-\infty }{\overset{T}{\int }}\left\Vert \nabla w_{t}(t)\right\Vert
_{L_{2}(\Omega )}^{2}dt\leq 1,\text{ \ }\forall T\leq T_{k},
\end{equation*}%
which together with (5.16) yields (5.3).
\end{proof}

\begin{lemma}
\textit{Assume that conditions (2.1)-(2.3) are satisfied}. \textit{Then}
there exists $k_{0}\in
\mathbb{N}
$ such that \textit{\ }%
\begin{equation}
\underset{s\rightarrow -\infty }{\lim }\left( \left\Vert
u_{k_{0}t}(t,s)\right\Vert _{L_{2}(\Omega )}+\left\Vert
u_{k_{0}}(t,s)\right\Vert _{H^{1}(\Omega )}\right) =0,\text{ \ }\forall
t\leq T_{k_{0}}  \tag{5.18}
\end{equation}
\end{lemma}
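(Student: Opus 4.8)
The plan is to derive a dissipative energy estimate for $u_{k}$ and then let $s\to-\infty$, using the decay of $w_{t}$ provided by (5.6) and (5.17). First I would rewrite the $u_{k}$-equation (5.2) so that a nonnegative damping appears. Since $w_{t}=u_{kt}+v_{kt}$, one has $\sigma(w)w_{t}-\sigma_{k}(w)v_{kt}=\sigma(w)u_{kt}+(\sigma(w)-\sigma_{k}(w))v_{kt}$, so (5.2) becomes
\begin{equation*}
u_{ktt}-\Delta u_{kt}+\sigma(w)u_{kt}-\Delta u_{k}=h_{k},\quad h_{k}:=-(\sigma(w)-\sigma_{k}(w))v_{kt}+f_{k}(w)-f(w),
\end{equation*}
with data $(u_{k},u_{kt})(s,s,\cdot)=(w(s,\cdot),w_{t}(s,\cdot))$. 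Testing by $u_{kt}+\mu u_{k}$ with small $\mu\in(0,1)$, using $\langle\sigma(w)u_{kt},u_{kt}\rangle\geq0$ and $\sigma(w)\in L_{\frac{3}{2}}(\Omega)$ to absorb the cross terms exactly as in Lemma 4.1 and Lemma 5.1, I expect a differential inequality
\begin{equation*}
\frac{d}{dt}\Psi_{k}(t,s)+\alpha\Psi_{k}(t,s)\leq |\langle h_{k},u_{kt}+\mu u_{k}\rangle|,
\end{equation*}
where $\alpha>0$ and $\Psi_{k}$ is equivalent to $\|u_{kt}\|_{L_{2}(\Omega)}^{2}+\|u_{k}\|_{H_{0}^{1}(\Omega)}^{2}$.

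The heart of the matter is the estimate of $h_{k}$, which is supported in $\{|w|>k\}$ and has critical growth. For the damping remainder I would \emph{not} use the bound (5.3) on $v_{kt}$ (it grows like $k^{128/65}$); instead, substituting $v_{kt}=w_{t}-u_{kt}$ I split $\langle(\sigma(w)-\sigma_{k}(w))v_{kt},u_{kt}\rangle$ into a $w_{t}$-part and a $u_{kt}$-part. Writing $\delta_{k}(\tau):=\int_{\{|w(\tau)|>k\}}|w(\tau)|^{6}\,dx$ and using $|\sigma(w)-\sigma_{k}(w)|\leq c(1+|w|^{4})$ on $\{|w|>k\}$, Hölder's inequality with exponents $\tfrac{3}{2},6,6$ and $H_{0}^{1}(\Omega)\subset L_{6}(\Omega)$ give $\|\sigma(w)-\sigma_{k}(w)\|_{L_{\frac{3}{2}}(\Omega)}\leq c\,\delta_{k}^{2/3}$, so the $u_{kt}$-part is absorbed by $\tfrac14\|\nabla u_{kt}\|_{L_{2}(\Omega)}^{2}$ once $\delta_{k}$ is small, while the $w_{t}$-part contributes $c\,\delta_{k}^{4/3}\|\nabla w_{t}\|_{L_{2}(\Omega)}^{2}$. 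The source $f_{k}(w)-f(w)$ is treated directly: $\|f_{k}(w)-f(w)\|_{L_{\frac{6}{5}}(\Omega)}^{6/5}\leq c\,\delta_{k}$, which after absorption contributes $c\,\delta_{k}^{5/3}$ (the $\mu u_{k}$ pairings being lower order and handled analogously). The key structural fact is that, since $\mathcal{A}_{\mathcal{H}}$ is compact in $\mathcal{H}$ and $H_{0}^{1}(\Omega)\subset L_{6}(\Omega)$ continuously, the family $\{|w(\tau)|^{6}:\tau\in R\}$ is uniformly integrable along the invariant trajectory $\gamma$, so $\bar{\delta}_{k}:=\sup_{\tau}\delta_{k}(\tau)\to0$ as $k\to\infty$, while $\delta_{k}(\tau)\to0$ as $\tau\to-\infty$ because $w(\tau)$ tends to the bounded set of equilibria.

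Integrating $\frac{d}{dt}\Psi_{k}+\alpha\Psi_{k}\leq c\,\delta_{k}(t)^{4/3}\|\nabla w_{t}(t)\|_{L_{2}(\Omega)}^{2}+c\,\delta_{k}(t)^{5/3}$ by Gronwall's lemma yields
\begin{equation*}
\Psi_{k}(t,s)\leq e^{-\alpha(t-s)}\Psi_{k}(s,s)+c\,\bar{\delta}_{k}^{4/3}\int_{s}^{t}e^{-\alpha(t-\tau)}\|\nabla w_{t}(\tau)\|_{L_{2}(\Omega)}^{2}\,d\tau+\frac{c}{\alpha}\bar{\delta}_{k}^{5/3}.
\end{equation*}
Here $\Psi_{k}(s,s)$ is comparable to $E(w(s))$, which is bounded on $\mathcal{A}_{\mathcal{H}}$, so $e^{-\alpha(t-s)}\Psi_{k}(s,s)\to0$ as $s\to-\infty$, and the integral is dominated by $\int_{-\infty}^{t}\|\nabla w_{t}\|_{L_{2}(\Omega)}^{2}\,d\tau<\infty$ thanks to (5.6). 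To upgrade the remaining $\bar{\delta}_{k}^{5/3}$ from mere smallness to the exact limit $0$, I would not bound $\int_{\{|w|>k\}}|w|^{6}$ crudely but route it through the tail identity: testing (1.1) by $w^{(m)}$ as in (5.14)--(5.15) controls $\|\nabla w^{(m)}\|_{L_{2}(\Omega)}^{2}$, and hence $\delta_{k}$, by the integrable dissipation $\|\nabla w_{t}\|_{L_{2}(\Omega)}^{2}$ plus $cm^{-4}$ up to a total time derivative $\frac{d}{d\tau}\langle w_{t},w^{(m)}\rangle$. Choosing $m=m(k)$ as in Lemma 5.1, the surviving boundary contributions are of the form $|\langle w_{t},w^{(m)}\rangle|\leq c\,m^{-q}\|\nabla w_{t}\|_{L_{2}(\Omega)}$ together with an $s$-term that vanishes as $s\to-\infty$, and the decay (5.17) of $w_{t}$ at $-\infty$ forces the whole right-hand side to zero for a single fixed $k_{0}$, giving (5.18).

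I expect the main obstacle to be exactly this final point. Unlike the damping remainder, the critical source $f_{k}(w)-f(w)$ cannot be converted into a multiple of $\|\nabla w_{t}\|_{L_{2}(\Omega)}^{2}$ by the substitution $v_{kt}=w_{t}-u_{kt}$, so a direct estimate produces an essentially constant-in-time source of size $\bar{\delta}_{k}^{5/3}$, which under the $e^{-\alpha(t-\tau)}$ weight yields only $\limsup_{s\to-\infty}\Psi_{k_{0}}(t,s)\lesssim\bar{\delta}_{k_{0}}^{5/3}$ rather than $0$. Replacing $\int_{\{|w|>k\}}|w|^{6}$ by the gradient of the truncation $w^{(m)}$ and using the dissipation identity (5.14)--(5.15) together with (5.6) and (5.17) is what turns this smallness into an exact limit, and carefully balancing the truncation level $m$ against $k$ (as already done in Lemma 5.1) is the delicate step.
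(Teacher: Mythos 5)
Your setup coincides with the paper's: the paper also tests (5.2)$_1$ by $u_{kt}+\mu u_{k}$, keeps $\langle\sigma(w)u_{kt},u_{kt}\rangle\geq 0$ as damping, and arrives at a differential inequality of the form (5.20). The gap is in your treatment of the truncation remainders. By estimating $\sigma(w)-\sigma_{k}(w)$ and $f(w)-f_{k}(w)$ through the \emph{external} quantity $\delta_{k}(\tau)=\int_{\{|w(\tau)|>k\}}|w|^{6}dx$, you create genuine forcing terms, and Gronwall then yields only $\limsup_{s\to-\infty}\Psi_{k}(t,s)\lesssim \bar{\delta}_{k}^{4/3}\int_{-\infty}^{t}e^{-\alpha(t-\tau)}\|\nabla w_{t}(\tau)\|_{L_{2}(\Omega)}^{2}d\tau+\bar{\delta}_{k}^{5/3}$: small, but not zero. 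Your proposed repair via (5.14)--(5.15) does not close this: that route still leaves the constant $c_{20}m^{-4}$, and, after integrating the total derivative against the weight $e^{-\alpha(t-\tau)}$, the boundary term $\langle w_{t}(t),w^{(m)}(t)\rangle$ at the \emph{fixed} final time $t$ as well as the weighted tail integral of $\|\nabla w_{t}\|_{L_{2}(\Omega)}^{2}$. All of these are fixed positive numbers independent of $s$, so they survive the limit $s\to-\infty$; note that (5.17) gives decay as $t\to-\infty$, which is of no help when $t\leq T_{k_{0}}$ is held fixed and only $s$ recedes. The exactness of the limit matters: the regularity argument after Lemma 5.2 needs $v_{k_{0}}(T_{k_{0}},s)=w(T_{k_{0}})-u_{k_{0}}(T_{k_{0}},s)$ to converge to $w(T_{k_{0}})$ in $\mathcal{H}$, so that the $\mathcal{H}_{1}$-bound (5.3) passes to the limit and gives $w(T_{k_{0}})\in\mathcal{H}_{1}$; mere smallness of $u_{k_{0}}$ only places $w(T_{k_{0}})$ near a bounded set of $\mathcal{H}_{1}$, which proves nothing about membership.

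The idea you are missing is the one you explicitly discarded. The paper uses (5.3) and (5.4) together with the interpolation $\|\varphi\|_{H^{3/2+\varepsilon}(\Omega)}\leq c\|\varphi\|_{H^{1}(\Omega)}^{1/2-\varepsilon}\|\varphi\|_{H^{2}(\Omega)}^{1/2+\varepsilon}$ and the embedding $H^{3/2+\varepsilon}(\Omega)\subset C(\overline{\Omega})$ to obtain $\|v_{k}(t,s)\|_{C(\overline{\Omega})}\leq\frac{1}{2}k$ for $s\leq t\leq T_{k}$ and $k$ large; this is exactly why Lemma 5.1 was engineered to give the exponent $\frac{128}{65}<2$, since $(\frac{1}{2}+\varepsilon)\frac{128}{65}<1$. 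Consequently $\{|w(t)|>k\}\subset\{|u_{k}|>|v_{k}|\}$ and $|w|\leq 2|u_{k}|$ on that set, so the critically growing remainders are bounded by the unknown itself: (5.21) and (5.22) give $\|\sigma(w)-\sigma_{k}(w)\|_{L_{3/2}(\Omega)}\leq\eta_{k}\|\nabla u_{k}\|_{L_{2}(\Omega)}$ and $\|f(w)-f_{k}(w)\|_{L_{6/5}(\Omega)}\leq\eta_{k}\|\nabla u_{k}\|_{L_{2}(\Omega)}$ with $\eta_{k}\to 0$ by (5.23). The right-hand side of (5.20) is then absorbed --- the $f$-part into the dissipative term $c_{1}\widetilde{\Lambda}_{k,s}$, the $\sigma$-part into $\widehat{c}_{2}\|\nabla v_{kt}\|_{L_{2}(\Omega)}^{2}\widetilde{\Lambda}_{k,s}$ --- leaving the \emph{homogeneous} inequality $\frac{d}{dt}\widetilde{\Lambda}_{k,s}+\widehat{c}_{1}\widetilde{\Lambda}_{k,s}\leq\widehat{c}_{2}\|\nabla v_{kt}\|_{L_{2}(\Omega)}^{2}\widetilde{\Lambda}_{k,s}$ with no forcing at all. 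Gronwall together with (5.7), which says $\int_{s}^{T}\|\nabla v_{kt}\|_{L_{2}(\Omega)}^{2}dt$ grows only like $(T-s)^{1/2}$, then gives $\widetilde{\Lambda}_{k,s}(t)\leq\widetilde{\Lambda}_{k,s}(s)\,e^{-\widehat{c}_{1}(t-s)+\widehat{c}_{2}c_{6}(1+(t-s)^{1/2})}\to 0$ as $s\to-\infty$, i.e. the exact limit (5.18). So the bound (5.3), which you set aside because ``it grows like $k^{128/65}$,'' is precisely the ingredient that makes the proof work: not as a bound on $v_{kt}$ in a forcing term, but, via interpolation, as a sup-norm bound on $v_{k}$ that converts the truncation errors into multiples of the energy being estimated.
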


\begin{proof}
Multiplying both sides of (5.2)$_{1}$ by $u_{kt}+\mu u_{k}$ ($\mu \in (0,1)$%
) and integrating over $\Omega $, we obtain%
\begin{equation*}
\frac{d}{dt}\left( E(u_{k}(t,s))+\frac{\mu }{2}\left\Vert \nabla
u_{k}(t,s)\right\Vert _{L_{2}(\Omega )}^{2}+\mu \left\langle
u_{kt}(t,s),u_{k}(t,s)\right\rangle \right) +
\end{equation*}%
\begin{equation*}
+\left\Vert \nabla u_{kt}(t,s)\right\Vert _{L_{2}(\Omega )}^{2}+\mu
\left\Vert \nabla u_{k}(t,s)\right\Vert _{L_{2}(\Omega )}^{2}-\mu \left\Vert
u_{kt}(t,s)\right\Vert _{L_{2}(\Omega )}^{2}\leq
\end{equation*}%
\begin{equation*}
\leq \left\Vert \sigma (w(t))-\sigma _{k}(w(t))\right\Vert _{L_{\frac{3}{2}%
}(\Omega )}\left\Vert v_{kt}(t,s)\right\Vert _{L_{6}(\Omega )}\left\Vert
u_{kt}(t,s)\right\Vert _{L_{6}(\Omega )}+
\end{equation*}%
\begin{equation*}
+\mu \left\Vert \sigma (w(t))\right\Vert _{L_{\frac{3}{2}}(\Omega
)}\left\Vert u_{kt}(t,s)\right\Vert _{L_{6}(\Omega )}\left\Vert
u_{k}(t,s)\right\Vert _{L_{6}(\Omega )}+
\end{equation*}%
\begin{equation*}
+\mu \left\Vert \sigma (w(t))-\sigma _{k}(w(t))\right\Vert _{L_{\frac{3}{2}%
}(\Omega )}\left\Vert v_{kt}(t,s)\right\Vert _{L_{6}(\Omega )}\left\Vert
u_{k}(t,s)\right\Vert _{L_{6}(\Omega )}+
\end{equation*}%
\begin{equation*}
+\left\Vert f(w(t))-f_{k}(w(t))\right\Vert _{L_{\frac{6}{5}}(\Omega
)}\left\Vert u_{kt}(t,s)\right\Vert _{L_{6}(\Omega )}+
\end{equation*}%
\begin{equation}
+\mu \left\Vert f(w(t))-f_{k}(w(t))\right\Vert _{L_{\frac{6}{5}}(\Omega
)}\left\Vert u_{k}(t,s)\right\Vert _{L_{6}(\Omega )},\text{ \ }\forall t\geq
s.  \tag{5.19}
\end{equation}%
Taking into account (2.4) in (5.19) and choosing $\mu $ small enough, we find%
\begin{equation*}
\frac{d}{dt}\left( E(u_{k}(t,s))+\frac{\mu }{2}\left\Vert \nabla
u_{k}(t,s)\right\Vert _{L_{2}(\Omega )}^{2}+\mu \left\langle
u_{kt}(t,s),u_{k}(t,s)\right\rangle \right) +
\end{equation*}%
\begin{equation*}
+c_{1}\left( E(u_{k}(t,s))+\frac{\mu }{2}\left\Vert \nabla
u_{k}(t,s)\right\Vert _{L_{2}(\Omega )}^{2}+\mu \left\langle
u_{kt}(t,s),u_{k}(t,s)\right\rangle \right) \leq
\end{equation*}%
\begin{equation*}
\leq c_{2}\left\Vert \sigma (w(t))-\sigma _{k}(w(t))\right\Vert _{L_{\frac{3%
}{2}}(\Omega )}^{2}\left\Vert v_{kt}(t,s)\right\Vert _{L_{6}(\Omega )}^{2}+
\end{equation*}%
\begin{equation}
+c_{2}\left\Vert f(w(t))-f_{k}(w(t))\right\Vert _{L_{\frac{6}{5}}(\Omega
)}^{2},\text{ \ }s\leq t\leq T_{k},  \tag{5.20}
\end{equation}%
where $c_{1}$ and $c_{2}$ are positive constants. Now let us estimate the
terms on the right side of (5.20). Since $H^{\frac{3}{2}+\varepsilon
}(\Omega )\subset C(\overline{\Omega })$ and%
\begin{equation*}
\left\Vert \varphi \right\Vert _{H^{\frac{3}{2}+\varepsilon }(\Omega )}\leq
c_{3}(\varepsilon )\left\Vert \varphi \right\Vert _{H^{1}(\Omega )}^{\frac{1%
}{2}-\varepsilon }\left\Vert \varphi \right\Vert _{H^{2}(\Omega )}^{\frac{1}{%
2}+\varepsilon },\text{ }\forall \varphi \in H^{2}(\Omega ),\text{ }\forall
\varepsilon \in (0,\frac{1}{2}],
\end{equation*}%
from (5.3) and (5.4) it follows that%
\begin{equation*}
\left\Vert v_{k}(t,s)\right\Vert _{C(\overline{\Omega })}\leq \frac{1}{2}k,%
\text{ \ }s\leq t\leq T_{k},
\end{equation*}%
for large enough $k$. The last inequality together with (2.1)-(2.4) yields
that%
\begin{equation*}
\left\Vert \sigma (w(t))-\sigma _{k}(w(t))\right\Vert _{L_{\frac{3}{2}%
}(\Omega )}^{\frac{3}{2}}\leq c_{4}\underset{\left\{ x:x\in {\small \Omega ,}%
\left\vert w(t,x)\right\vert >k\right\} }{\int }\left\vert w(t,x)\right\vert
^{6}dx\leq
\end{equation*}%
\begin{equation*}
\leq c_{5}\left( \underset{\left\{ x:x\in {\small \Omega ,}\left\vert
w(t,x)\right\vert >k\right\} }{\int }\left\vert w(t,x)\right\vert
^{6}dx\right) ^{\frac{1}{4}}\leq
\end{equation*}%
\begin{equation*}
\leq c_{5}\left( \underset{\left\{ x:x\in {\small \Omega ,}\left\vert
u_{k}(t,s,x)\right\vert >\left\vert v_{k}(t,s,x)\right\vert \right\} }{\int }%
\left\vert w(t,x)\right\vert ^{6}dx\right) ^{\frac{1}{4}}\leq
\end{equation*}%
\begin{equation*}
\leq c_{6}\left( \underset{\left\{ x:x\in {\small \Omega ,}\left\vert
u_{k}(t,s,x)\right\vert >\left\vert v_{k}(t,s,x)\right\vert \right\} }{\int }%
\left\vert u_{k}(t,s,x)\right\vert ^{6}dx\right) ^{\frac{1}{4}}\leq
\end{equation*}%
\begin{equation}
\leq c_{6}\left\Vert \nabla u_{k}(t,s)\right\Vert _{L_{2}(\Omega )}^{\frac{3%
}{2}},\text{ \ }s\leq t\leq T_{k},  \tag{5.21}
\end{equation}%
and
\begin{equation*}
\left\Vert f(w(t))-f_{k}(w(t))\right\Vert _{L_{\frac{6}{5}}(\Omega )}^{\frac{%
6}{5}}\leq c_{7}\underset{\left\{ x:x\in {\small \Omega ,}\left\vert
w(t,x)\right\vert >k\right\} }{\int }\left\vert w(t,x)\right\vert ^{6}dx\leq
\end{equation*}%
\begin{equation*}
\leq c_{8}\left( \underset{\left\{ x:x\in {\small \Omega ,}\left\vert
w(t,x)\right\vert >k\right\} }{\int }\left\vert w(t,x)\right\vert
^{6}dx\right) ^{\frac{4}{5}}\times
\end{equation*}%
\begin{equation*}
\times \left( \underset{\left\{ x:x\in {\small \Omega ,}\left\vert
u_{k}(t,s,x)\right\vert >\left\vert v_{k}(t,s,x)\right\vert \right\} }{\int }%
\left\vert w(t,x)\right\vert ^{6}dx\right) ^{\frac{1}{5}}\leq
\end{equation*}%
\begin{equation*}
\leq c_{9}\left( \underset{\left\{ x:x\in {\small \Omega ,}\left\vert
w(t,x)\right\vert >k\right\} }{\int }\left\vert w(t,x)\right\vert
^{6}dx\right) ^{\frac{4}{5}}\times
\end{equation*}%
\begin{equation*}
\times \left( \underset{\left\{ x:x\in {\small \Omega ,}\left\vert
u_{k}(t,s,x)\right\vert >\left\vert v_{k}(t,s,x)\right\vert \right\} }{\int }%
\left\vert u_{k}(t,s,x)\right\vert ^{6}dx\right) ^{\frac{1}{5}}\leq
\end{equation*}%
\begin{equation}
\leq c_{10}\left( \underset{\left\{ x:x\in {\small \Omega ,}\left\vert
w(t,x)\right\vert >k\right\} }{\int }\left\vert w(t,x)\right\vert
^{6}dx\right) ^{\frac{4}{5}}\left\Vert \nabla u_{k}(t,s)\right\Vert
_{L_{2}(\Omega )}^{\frac{6}{5}},\text{ \ \ }s\leq t\leq T_{k},  \tag{5.22}
\end{equation}%
for large enough $k$. On the other hand, since $\mathcal{A}_{\mathcal{H}}$
is compact subset of $\mathcal{H}$ and $(w(t),w_{t}(t))\in \mathcal{A}_{%
\mathcal{H}}$, we have%
\begin{equation}
\underset{t\in R}{\sup }\underset{\left\{ x:x\in {\small \Omega ,}\left\vert
w(t,x)\right\vert >k\right\} }{\int }\left\vert w(t,x)\right\vert
^{6}dx\rightarrow 0\text{ as }k\rightarrow \infty  \tag{5.23}
\end{equation}%
Thus choosing $\mu $ small enough, $k$ large enough and taking into account
(5.21)-(5.23) in (5.20), we obtain
\begin{equation*}
\frac{d}{dt}\widetilde{\Lambda }_{k,s}(t)+\widehat{c}_{1}\widetilde{\Lambda }%
_{k,s}(t)\leq \widehat{c}_{2}\left\Vert \nabla v_{kt}(t,s)\right\Vert
_{L_{2}(\Omega )}^{2}\widetilde{\Lambda }_{k,s}(t),\text{ }s\leq t\leq T_{k},
\end{equation*}%
where $\widehat{c}_{1}$ and $\widehat{c}_{2}$ are positive constants and $%
\widetilde{\Lambda }_{k,s}(t)=E(u_{k}(t,s))+\frac{\mu }{2}\left\Vert \nabla
u_{k}(t,s)\right\Vert _{L_{2}(\Omega )}^{2}\break+\mu \left\langle
u_{kt}(t,s),u_{k}(t,s)\right\rangle $. Now multiplying both sides of the
last inequality by\newline
$e^{\underset{s}{\overset{t}{\int }}\left[ \widehat{c}_{1}-\widehat{c}%
_{2}\left\Vert \nabla v_{kt}(\tau,s)\right\Vert _{L_{2}(\Omega )}^{2}\right]
d\tau }$ , integrating over $\left[ s,T_{k}\right] $ and multiplying both
sides of the obtained inequality by $e^{-\underset{s}{\overset{T_{k}}{\int }}%
\left[ \widehat{c}_{1}-\widehat{c}_{2}\left\Vert \nabla
v_{kt}(t,s)\right\Vert _{L_{2}(\Omega )}^{2}\right] dt}$, we find%
\begin{equation*}
\widetilde{\Lambda }_{k,s}(T)\leq \widetilde{\Lambda }_{k,s}(s)e^{-\underset{%
s}{\overset{T_{k}}{\int }}\left[ \widehat{c}_{1}-\widehat{c}_{2}\left\Vert
\nabla v_{kt}(t,s)\right\Vert _{L_{2}(\Omega )}^{2}\right] dt},\text{ }s\leq
t\leq T_{k},
\end{equation*}%
which together with (5.7) yields (5.18).
\end{proof}

By Lemma 5.1 and Lemma 5.2, we have $(w(T_{k_{0}}),w_{t}(T_{k_{0}}))\in $ $%
\mathcal{H}_{1}$ and
\begin{equation*}
\left\Vert w_{t}(T_{k_{0}})\right\Vert _{H^{1}(\Omega )}+\left\Vert
w(T_{k_{0}})\right\Vert _{H^{2}(\Omega )}\leq \widehat {r}_{0},
\end{equation*}%
where $\widehat {r}_{0}$ is independent of $(w_{0},w_{1})$. Now since $%
w(t,x) $ satisfies (1.1)-(1.3) on $(T_{k_{0}},\infty )\times \Omega $, with
initial data $(w(T_{k_{0}}),$ $w_{t}(T_{k_{0}}))$, applying Lemma 4.1 and
taking into account the last inequality, we find $(w_{0},w_{1})\in
(H^{2}(\Omega )\cap H_{0}^{1}(\Omega ))\times H_{0}^{1}(\Omega )$ and
\begin{equation*}
\left\Vert (w_{0},w_{1})\right\Vert _{H^{2}(\Omega )\times H^{1}(\Omega
)}\leq R_{0},
\end{equation*}%
where the positive constant $R_{0}$ is independent of $(w_{0},w_{1})$. So $%
\mathcal{A}_{\mathcal{H}}$ is a bounded subset of $(H^{2}(\Omega )\cap
H_{0}^{1}(\Omega ))\times H_{0}^{1}(\Omega )$ and that is why it coincides
with $\mathcal{A}_{\mathcal{H}_{1}}$.\newline

\medskip \medskip

\end{document}